\numberwithin{equation}{section}
\theoremstyle{plain}
\newtheorem{theorem}{Theorem}[section]
\newtheorem{lemma}[theorem]{Lemma}
\theoremstyle{definition}
\theoremstyle{remark}
\newtheorem{remark}[theorem]{Remark}
\newcommand{\Ex}{\mathbb E}
\renewcommand{\Pr}{\mathbb{P}}
\newcommand{\Pf}{\mathbf{P}}
\newcommand{\N}{\ensuremath{\mathbb N}}
\newcommand{\R}{\ensuremath{\mathbb R}}
\newcommand{\Z}{\ensuremath{\mathbb Z}}
\newcommand{\Lr}[1]{Lemma~\ref{#1}}
\newcommand{\Tr}[1]{Theorem~\ref{#1}}
\begin{document}

\title{
	Gap at $1$ for the percolation threshold of Cayley graphs
}

\author{Christoforos Panagiotis\footnotemark[1]\footnote{Universit\'e de Gen\`eve, christoforos.panagiotis@unige.ch}~ and~Franco Severo\footnotemark[2]\footnote{ETH Z\"{u}rich, franco.severo@math.ethz.ch}}

\thispagestyle{empty}
\maketitle

\begin{abstract}
We prove that the set of possible values for the percolation threshold $p_c$ of Cayley graphs has a gap at $1$ in the sense that there exists $\varepsilon_0>0$ such that for every Cayley graph $G$ one either has $p_c(G)=1$ or $p_c(G) \leq 1-\varepsilon_0$. The proof builds on the new approach of \mbox{Duminil-Copin, Goswami, Raoufi, Severo \& Yadin (\emph{Duke Math. J., 2020})} to the existence of phase transition using the Gaussian free field, combined with the finitary version of Gromov's theorem on the structure of groups of polynomial growth of \mbox{Breuillard, Green \& Tao (\emph{Publ. Math. IH\'{E}S, 2012})}.
\end{abstract}

\section{Introduction}

\textbf{Motivation and main result.} In Bernoulli site percolation, each vertex of a connected, locally finite graph $G=(V,E)$ is either deleted (closed) or retained (open) independently at random with retention probability $p\in [0,1]$ to obtain a random subgraph $\omega$ of $G$. We write $\Pf_p$ for the law of $\omega$ and refer
to the connected components of $\omega$ as \emph{clusters}. A principal quantity of interest is the \emph{critical point} defined as
$$p_c=p_c(G)\coloneqq \inf\{p\in [0,1] : \omega \text{ has an infinite cluster } \Pf_p \text{-- a.s.}\}.$$

The first and most fundamental question in percolation theory is the existence of a non-trivial phase transition, which in our case corresponds to $0<p_c<1$. One can use elementary path-counting arguments \cite[Theorem 1.33]{MR1707339} to show that $p_c \geq 1/(D - 1)>0$ for every graph of maximum degree $D$. On the other hand, the complementary bound $p_c<1$ is, in general, harder to obtain. Still, in the classical setting of the hypercubic lattice $\Z^d$, a combinatorial technique, known as \emph{Peierl's argument}, allows to prove that $p_c(\Z^d)<1$ for every $d\geq 2$ \cite[Theorem 1.10]{MR1707339}. 

Since the pioneering work of Benjamini \& Schramm \cite{BenSch96}, the study of percolation beyond the classical setting of $\Z^d$, in particular on Cayley graphs and more generally, transitive and quasi-transitive graphs, has attracted a substantial amount of interest. While it is easy to prove that $p_c(G)=1$ for every graph $G$ with linear volume growth, Benjamini \& Schramm conjectured in the same paper that $p_c(G)<1$ for every quasi-transitive graph $G$ with superlinear volume growth. Several important cases of the
conjecture were initially obtained, including quasi-transitive graphs of polynomial growth -- this uses Gromov's celebrated theorem on groups of polynomial growth \cite{gromov1981groups}, see the discussion in \cite[Section 1]{DGRSY20} -- and exponential growth \cite{Lyons95},
and Cayley graphs of a finitely presented groups \cite{babson1999cut} -- see also \cite{BPP,candellero2015percolation,MP01,raoufi2017indicable,teixeira2016percolation} for other results concerning this problem. The full conjecture was finally resolved in the recent work of \mbox{Duminil-Copin, Goswami, Raoufi, Severo \& Yadin \cite{DGRSY20}}. In fact, their result applies beyond quasi-transitive graphs to any graph with isoperimetric dimension $d>4$ and bounded degree.

For arbitrary quasi-transitive graphs, $p_c(G)$ can take values arbitrarily close to $1$, but for transitive graphs one might expect that their quantitative structure theory leads to a uniformity on $p_c(G)$. Very recently, Hutchcroft \& Tointon \cite{HT21} obtained a non-trivial upper bound on $p_c(G)$ that depends only on the degree of the transitive graph $G$. Their proof is based on the approach of \cite{DGRSY20} and the recent work of Tessera \& Tointon \cite{TTfinitary} on the structure of transitive graphs, which in turn builds on the celebrated work of Breuillard, Green \& Tao \cite{BGT12} on the structure of approximate groups.

The main result of this article improves on the aforementioned result of Hutchcroft \& Tointon in the case of Cayley graphs, giving a uniform non-trivial upper bound on $p_c(G)$ that does not depend on the degree.

\begin{theorem}\label{thm:main}
There is a universal constant $\varepsilon_0>0$ such that $p_c(G)\leq 1-\varepsilon_0$ for site percolation on any Cayley graph $G$ of superlinear growth.
\end{theorem}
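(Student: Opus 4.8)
\section*{Proof proposal}

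The plan is to run the argument of Duminil-Copin–Goswami–Raoufi–Severo–Yadin \cite{DGRSY20}, which deduces $p_c(G)<1$ from the positivity of the Gaussian free field level-set percolation threshold $h_*(G)$, but keeping track of all constants, and to feed into it the \emph{finitary} structure theory \cite{BGT12, gromov1981groups} so that the input is uniform across all Cayley graphs of superlinear growth. I would first dichotomise according to the growth of $G$. If $G$ has polynomial growth of degree $D\ge 5$, or has superpolynomial growth, then $G$ has isoperimetric dimension $>4$ (for superpolynomial growth one uses the known growth gap, so that $|B_r|\ge e^{c\sqrt r}$ for a universal $c>0$, giving effectively infinite isoperimetric dimension). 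The point is then to make the $\mathcal{G}$-bound quantitative: for simple random walk on $G$ one wants $\sum_{x}\mathcal{G}(x,y)^2\le C\,\mathcal{G}(x,x)^2$ with $C$ universal, which follows from an on-diagonal heat kernel bound $p_t(x,x)\lesssim |B_{\sqrt t}|^{-1}$ together with a \emph{uniform} lower bound on the volume growth at the relevant scales — and the latter is exactly what the finitary version of Gromov's theorem provides (a scale $r$ past which $G$ genuinely grows at least like a $5$-dimensional ball, with constants independent of the group, the index of the nilpotent subgroup, the nilpotency class, and the generating set). Running the \cite{DGRSY20} scheme with these uniform inputs yields $h_*(G)\ge c_0>0$ and then $p_c(G)\le 1-\varepsilon_0$ with $\varepsilon_0$ universal.

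It remains to treat polynomial growth of degree $D\in\{2,3,4\}$, where \cite{DGRSY20} does not apply. By (finitary) Gromov's theorem $G$ is then, up to finite index, a Cayley graph of $\Z^2$, $\Z^3$, $\Z^4$, or of the discrete Heisenberg group, and in particular the nilpotency class is at most $2$. In the abelian case this is immediate and needs no structure theory: any generating set $S$ of $\Z^D$ contains two linearly independent elements $s_1,s_2$, the subgroup $\langle s_1,s_2\rangle\cong\Z^2$ induces in $G$ a subgraph containing the standard square lattice, and since $p_c(G)\le p_c(H)$ for any induced subgraph $H$, the classical Peierls bound gives $p_c(G)\le p_c(\Z^2)<1$. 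In the remaining (non-abelian or merely virtually abelian) cases the obvious copy of $\Z^2$ need not have the right induced edges — e.g.\ the central $\Z^2$ inside Heisenberg with its standard generators — so I would first pass to the generating set $S\cup S^2\cup\dots\cup S^{k}$ for a \emph{universally bounded} $k$ (bounded because the nilpotency class, and hence the depth of iterated commutators needed to reach a standard lattice, is bounded; the virtual/finite-index issues are absorbed here using the bounded complexity of these groups). The coarsened graph then contains the standard lattice as an induced subgraph, hence has $p_c<1$ with an explicit constant, and a Liggett–Schonmann–Stacey block argument transfers this back to the original generating set with only a bounded, hence universal, loss in the parameter.

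The main obstacle is uniformity, in two places. First, the \cite{DGRSY20} machinery must be made genuinely quantitative: the randomized-algorithm/differential-inequality step and the finite-scale ``seed'' estimate naively involve the degree of $G$, and one has to replace those inputs by the intrinsic geometry (volume growth, isoperimetry) so that $h_*$ — and then $p_c$ — is controlled by the isoperimetric constants alone. Second, and this is the crux, one must extract from \cite{BGT12} a good scale and a nilpotent model with constants independent of \emph{everything} — the finite-index nilpotent subgroup, its index, the nilpotency class, and the generating set all vary with $G$ — which is precisely what the soft form of Gromov's theorem cannot deliver and what makes the Cayley-graph setting amenable to a degree-free bound, in contrast with general transitive graphs. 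Once such a uniform scale is available, combining it with the quantitative GFF argument (fast growth and $D\ge 5$) and with the bounded-coarsening comparison (for $D\in\{2,3,4\}$) produces a single universal $\varepsilon_0$.
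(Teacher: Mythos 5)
Your high-dimensional step names the right obstacle but does not resolve it, and the resolution is the actual mathematical content of the paper. In the interpolation/surgery scheme of \cite{DGRSY20}, the cost of the local surgeries grows exponentially in the degree $D$ (a factor of order $(16D)^{L_n-1}$ in \Lr{lemma:dif}), and the only way this is beaten here is by return-probability bounds that \emph{improve} as $D\to\infty$: one needs $p_n(x,x)\leq C/(Dn^r)$ for \emph{all} $n\geq 1$ (\Tr{theorem:uni_pn}), which is false for general generating sets and is proved by first passing to a \emph{minimal} generating set (legitimate since $p_c(G)\leq p_c(G')$ for the subgraph $G'$), then combining a new edge-isoperimetric inequality (\Lr{lemma:isop}) with a local expansion estimate $|B(o,n)|\geq c_nD^n$ exploiting minimality (\Lr{lemma:expansion}); moreover, since $p_0(x,x)=1$ cannot improve with $D$, the very first ($1$-dependent) field must be integrated out by a surgery whose cost is degree-free (the $n=1$ case of \Lr{lemma:dif}). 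Your proposal contains none of these ingredients: ``replace the degree-dependent inputs by intrinsic geometry'' is a restatement of the goal, and the geometry you do invoke (heat kernel $\lesssim |B_{\sqrt t}|^{-1}$, a uniform Green-function bound) only yields estimates like \eqref{eq:gen_bound}, which get \emph{worse} with $D$ and cannot beat the surgery cost. Also, what is needed is not $h_*\geq c_0>0$ but only $h_*\geq 0$ (Bricmont--Lebowitz--Maes) together with a quantitative domination of $\{\varphi>-1\}$ by Bernoulli percolation at a uniform $p_0<1$, as in \Tr{theorem:comparison}.

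Your low-dimensional argument is genuinely different from the paper's (which applies \Tr{theorem:nil} to obtain a quotient containing a nilpotent subgroup of index at most $N(d_0)$, then quotes \Tr{thm:inf.perc.nilp} and monotonicity under quotients, \Tr{theorem:quotient}), but as written it has a gap exactly where universality matters. The purely abelian sub-case (two independent generators spanning a square-lattice subgraph) is fine and degree-free, but in the general virtually nilpotent case your coarsening to $S\cup S^2\cup\dots\cup S^k$ followed by a Liggett--Schonmann--Stacey comparison reintroduces the degree: the renormalized field on the embedded lattice is not $m$-dependent with $m$ depending on $k$ alone (lattice-distant blocks can be close in $G$, and a single vertex of $G$ can be shared by up to $|B_G(o,2k)|\approx D^{O(k)}$ blocks), so the LSS domination parameter, and hence your loss, depends on $D$ -- precisely the dependence the theorem must avoid, and precisely why the paper avoids LSS altogether. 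You also assert, without proof, a universally bounded $k$ and universally bounded index/complexity for growth degrees $2,3,4$; and note that the paper's dichotomy is at a large constant $d_0=d(3)$, not at dimension $5$, so your plan additionally requires the uniform GFF machinery to work all the way down to dimension $5$, a burden the paper sidesteps because the Hutchcroft--Tointon bound covers every fixed bounded dimension.
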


\begin{remark}
	By the classical inequality $p_c^{\text{bond}}\leq p_c^{\text{site}}$ relating the bond and site percolation thresholds, the uniform bound of Theorem~\ref{thm:main} also holds for bond percolation.
\end{remark}

We expect that the statement of \Tr{thm:main} can be extended to transitive graphs of superlinear growth. This would require suitable extensions of Lemmas~\ref{lemma:isop} and \ref{lemma:expansion}, which seems more challenging due to a poorer understanding of the geometry of transitive graphs.

\quad

\textbf{About the proof.} The proof of \Tr{thm:main} is split into two cases: the low-dimensional case and the high-dimensional one -- see Section~\ref{sec:preliminaries} for the definition of dimension.
In the low-dimensional case, one can use the finitary version of Gromov's theorem from \cite{BGT12} to obtain a non-trivial upper bound on $p_c$ for Cayley graphs of isoperimetric dimension $1<\mathrm{Dim}(G)\leq d$ depending only on $d$. This result is implicit in \cite{HT21}.

For the high-dimensional case, we use the method developed in \cite{DGRSY20}, which is based on a comparison between Bernoulli percolation and the Gaussian free field (GFF). We will first describe the general scheme of proof and then point out the differences as well as the main new ingredients when compared with \cite{DGRSY20}. The proof starts by observing that the site percolation model given by the excursion set $\{\varphi>-1\}\coloneqq\{x\in V:~\varphi_x>-1\}$ contains an infinite cluster almost surely -- see e.g.~\cite{BricmontLJM87} -- on any transient graph, where $\varphi = (\varphi_x)_{x\in V}$ is the GFF on $G$ -- see Section~\ref{sec:proof} for the definition. By using an interpolation argument, we then dominate the connection probabilities of this strongly correlated model by that of a standard Bernoulli percolation of parameter $p_0<1$, thus concluding that $p_c(G)\leq p_0<1$. For that purpose, we decompose the GFF into a sum $\varphi=\sum_{n\geq1} \phi^n$ of independent finite-range fields $(\phi^n)_{n\geq1}$, and then construct a hybrid model made of the superposition of a Bernoulli percolation of parameter $p$ and excursion sets of these finite-range fields above a level $\lambda$. By upper bounding the derivative of connection probabilities with respect $\lambda$ by the derivative with respect to $p$, we manage to integrate out each finite-range field while compensating it by a slight increase in $p$. The proof of this bound relies on certain \emph{local surgeries} relating different notions of pivotality events, in a similar spirit as the work of Aizenman \& Grimmett \cite{AizGri91} on essential enhancements.

In the general scheme of proof described in the previous paragraph, the large deviations of the finite-range fields -- which are related to the return probabilities $p_n(x,x)$ for the random walk on $G$ -- fight against the cost of the local surgeries -- which become worse as the degree $D$ of $G$ grows. Therefore, in order to end up with an upper bound $p_0<1$ independent of $D$, we need to prove a sufficiently good upper bound on $p_n(x,x)$ that is uniform over Cayley graphs of high enough dimension, and that furthermore becomes better as $D$ increases. Such a result is proved in Theorem~\ref{theorem:uni_pn} and is the most technical part of this article. The proof of Theorem~\ref{theorem:uni_pn} is divided into two cases: small and large $n$. For the case of large $n$, we apply a classical bound on the heat kernel in terms of the expansion profile of $G$ --  see Theorem~\ref{theorem:pn}. This reduces the problem to proving an appropriate isoperimetric inequality, which is obtained by combining a uniform lower bound on the volume of large balls provided by \cite{BGT12} and a slight improvement on a classical inequality relating growth and isoperimetry -- see Theorem~\ref{theorem:isop} and Lemma~\ref{lemma:isop} respectively. In the small $n$ case, we observe that if we assume the set of generators of a group to be minimal, then the scarcity of cycles in the associated Cayley graph provides a good lower bound for the size of small balls in terms of $D$ --  see Lemma~\ref{lemma:expansion}. We remark that Lemmas~\ref{lemma:isop} and \ref{lemma:expansion} are specific to Cayley graphs and are the reason why do not prove the main result for transitive graphs.

Finally, we would like to highlight some key differences with \cite{DGRSY20}.
First, we consider the (perhaps simpler) site percolation model $\{\varphi>-1\}$ instead of the Bernoulli bond percolation model on random environment used in \cite{DGRSY20}. This choice slightly simplifies the construction of the interpolation scheme and allows us to directly obtain a uniform upper bound for the site percolation threshold $p_c^{\text{site}}$ -- the classical inequality $p_c^{\text{site}}\leq p_c^{\text{site}}\leq 1-(1-p_c^{\text{bond}})^D$ \cite[Theorem 1.33]{MR1707339} gives an upper bound for $p_c^{\text{site}}$ in terms of $p_c^{\text{bond}}$ that depends on $D$. Second, we integrate out \emph{all} the fields in the decomposition of $\varphi$ through interpolation, instead of integrating out only the fields with large range and then applying the domination of finite-range models by product measure of \cite{LigSchSta97}. This is crucial as the result of \cite{LigSchSta97} inevitably depends on the degree $D$. Another important difference is that, as mentioned above, it is essential for us to prove a uniform bound on the return probabilities $p_n(x,x)$ that becomes better as $D\to\infty$. However, this is not possible for $n=0$ since $p_0(x,x)=1$ regardless of $D$. As a consequence, while integrating out the very first field (which is $1$-dependent), we need to perform a local surgery whose cost does not depend on the degree -- see Lemma~\ref{lemma:dif}. This is the main technical difference between the local surgery performed in our proof and that of \cite{DGRSY20}.

\quad

\textbf{Acknowledgments.} We would like to thank Hugo Duminil-Copin for inspiring discussions. This research was supported by the Swiss National Science Foundation and the NCCR SwissMAP.

\section{Preliminaries}\label{sec:preliminaries}

In this section, we will recall some definitions and important results. 
Given any graph $G=(V,E)$, we will denote by $B(x,r)$ the ball of radius $r\geq0$ centered at $x\in \Gamma$ for the graph distance in $G$. When necessary, we shall stress the dependency on $G$ by writing $B_G(x,r)$ instead. Given a subset $K\subset V$, we may consider its \emph{edge boundary} $\partial_E K\coloneqq \{e=\{x,y\}:~ x\in K \text{ and } y\notin K\}$ and its (inner) \emph{vertex boundary} 
$\partial_V K\coloneqq\{x\in K \mid \exists y\notin K \text{ such that } y\sim x\}$. 

Given a finitely generated group $\Gamma$ and $S$ a finite symmetric (i.e. $S^{-1}\coloneqq \{g^{-1} \mid g\in S\}=S$) generating set, one can define the Cayley graph $G=\text{Cay}(\Gamma,S)$ of $\Gamma$ with respect to $S$ as the undirected graph with vertex set $\Gamma$ and edge set consisting on all pairs $\{x, xg\}$ whenever $x\in \Gamma$ and $g\in S$.
We may also label each directed edge $(x, xg)$ by $g$. Throughout, a \emph{word} will be an expression $g_1g_2\ldots g_n$ where each $g_1,g_2,\ldots,g_n$ is an element of $S$. 
We call $S$ \emph{minimal} if it generates $\Gamma$ but no proper subset of $S$ does, and in that case the associated Cayley graph $G=\text{Cay}(\Gamma,S)$ is called a \emph{minimal} Cayley graph. 

Finally, we recall that every Cayley graph $G=\text{Cay}(\Gamma,S)$ has a well defined volume growth dimension $\mathrm{Dim}(G)\in\N\cup\{\infty\}$ (which in fact depends on $\Gamma$ only). More precisely, it follows from \mbox{Gromov's} theorem \cite{gromov1981groups} that every finitely generated group has either superpolynomial volume growth (i.e.~$\lim_{n\to} \log(|B(o,n)|)/\log{n} =\infty$), in which case we write $\mathrm{Dim}(G)=\infty$, or it has a polynomial volume growth with exponent $d\in \N$ (i.e.~there exist constant $c,C\in(0,\infty)$ such that $cn^d\leq |B(o,n)|\leq Cn^d$ for all $n\geq1$), in which case we write $\mathrm{Dim}(G)=d$.

\subsection{A finitary version of Gromov's theorem}\label{sec:finitary}

We will now state some results on the structure of Cayley graphs of polynomial growth obtained by Breuillard, Green \& Tao \cite{BGT12}. The first one is a finitary version of Gromov's theorem. 

\begin{theorem}[{\cite[Corollary 11.5]{BGT12}}]\label{theorem:nil}
For every $k>0$, there is a constant $N=N(k)$ such that the following holds. Consider a group $\Gamma$ and let $S$ be a generating set of $\Gamma$. Suppose that $|B(o,n)|\leq |S|n^k$ in the Cayley graph $\text{Cay}(\Gamma,S)$ for some $n\geq N$. Then there is a normal subgroup $H \lhd \Gamma$ such that $\Gamma/H$ contains a nilpotent subgroup of index at most $N$.
\end{theorem}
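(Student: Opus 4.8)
The statement is the finitary analogue of Gromov's theorem \cite{gromov1981groups} due to Breuillard, Green and Tao, and the plan is to follow their strategy: reduce the hypothesis to a structure theorem for approximate groups, and feed that theorem with the local solution to Hilbert's fifth problem. Recall that a finite symmetric set $A$ containing the identity is a \emph{$K$-approximate group} if $A\cdot A$ is covered by $K$ left-translates of $A$. \emph{Step 1: produce an approximate group at some scale.} Since $|B(o,1)|=1+|S|$, the telescoping product $\prod_i |B(o,2^{i+1})|/|B(o,2^i)|$ of consecutive doubling ratios over the $\sim\log_2 n$ dyadic scales below $n$ equals $|B(o,2^m)|/|B(o,1)| \le |S|n^k/|S| = n^k$ by monotonicity of balls. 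A pigeonhole argument over blocks of consecutive dyadic scales then yields, as soon as $n$ exceeds an absolute constant, a radius $r$ at which one has bounded doubling $|B(o,8r)|\le K|B(o,r)|$ with $K=O_k(1)$; by a Ruzsa-type covering argument this forces $A:=B(o,2r)$ to be an $O_k(1)$-approximate group. Note that $S\subseteq A$, so $\langle A\rangle=\Gamma$.

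\emph{Step 2: structure of $A$.} I would then invoke the main structure theorem for approximate groups of \cite{BGT12}: every $K'$-approximate group $A$ is covered by $O_{K'}(1)$ left-translates of a set $\pi^{-1}(P)$, where $\pi\colon\langle A\rangle\to\langle A\rangle/H$ is the quotient by a finite subgroup $H$ normalized by $A$, and $P$ is a nilprogression of rank and step $O_{K'}(1)$. The proof of this theorem is where the substance lies: one passes to an ultraproduct $\prod_{j\to\omega}A_j$ of a hypothetical sequence of approximate groups, quotients by the ``infinitesimally small'' elements to obtain a locally compact group $L$ generated by an open precompact identity neighbourhood, checks that the approximate-group hypothesis removes the small-subgroups obstruction so that the Gleason--Yamabe theorem applies, passes to a connected Lie quotient, uses that connected Lie groups of polynomial growth are virtually nilpotent together with escape-norm and iterated-commutator estimates to build an explicit nilprogression in the Lie model, and finally transfers the resulting structure back to $A_j$ for $\omega$-almost every $j$.

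\emph{Step 3: conclude.} Because $\langle A\rangle=\Gamma$, the finite subgroup $H$ produced in Step 2 is normalized by all of $\Gamma$, hence $H\lhd\Gamma$, and $\Gamma/H$ is generated by $\pi(A)$, which is covered by boundedly many translates of a nilprogression of bounded rank and step. A nilprogression generates a virtually nilpotent group, and tracking the covering number together with the rank and step bounds shows that $\Gamma/H$ contains a nilpotent subgroup of index $O_k(1)$. Taking $N(k)$ to be the maximum of this index bound and the absolute threshold needed for the pigeonhole in Step 1 then gives the theorem.

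I expect the main obstacle to be the approximate-group structure theorem invoked in Step 2: it is essentially equivalent to a finitary, ``local'' version of Hilbert's fifth problem, and the delicate points are verifying local compactness and the no-small-subgroups condition for the ultraproduct group so that Gleason--Yamabe applies, and making the descent from the Lie model to a concrete nilprogression uniform in all parameters. It is precisely this ultraproduct/compactness step that renders $N(k)$ non-effective; an alternative route avoiding approximate groups is the harmonic-function method of Kleiner together with the quantitative refinements of Shalom and Tao, which yields a comparable finitary statement with (quasi-polynomially) effective bounds, and either version suffices for the applications in this paper.
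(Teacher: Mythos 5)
You should first note that the paper does not prove this statement at all: Theorem~\ref{theorem:nil} is imported verbatim from \cite{BGT12} (Corollary~11.5 there) and used as a black box, so the only meaningful comparison is with Breuillard--Green--Tao's own argument. Measured against that, your outline is a faithful high-level reconstruction of their strategy: pigeonholing the single-scale growth hypothesis over dyadic scales to find a radius with bounded doubling, a Ruzsa covering argument to turn the ball $B(o,2r)\supseteq S$ into an $O_k(1)$-approximate group, the structure theorem for approximate groups (ultraproducts, Gleason--Yamabe, Lie models, nilprogressions), and the observation that $H$, being normalized by a generating set, is normal in all of $\Gamma$. As a proof it is of course not self-contained --- essentially all of the substance is the structure theorem you quote in Step~2 --- but that is consistent with how the present paper treats the statement.

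Two caveats. First, Step~3 is thinner than your phrasing suggests: knowing that $\pi(A)$ is covered by $O_k(1)$ translates of a nilprogression $P$ of bounded rank and step does not by itself bound the index of $\langle P\rangle$ in $\Gamma/H$ --- covering a generating set by boundedly many cosets of a subgroup gives no index bound in general. One needs the stronger form of the structure theorem (the coset nilprogression sits inside $A^{4}$ and is comparable in size to $A$) together with a genuine argument, which is exactly what Section~11 of \cite{BGT12} supplies on the way to Corollary~11.5; ``tracking the covering number'' undersells this. Second, your closing claim that the Kleiner/Shalom--Tao route ``suffices for the applications in this paper'' should be checked rather than asserted: what this paper actually needs is the uniformity of the bound $N(k)$ (a nilpotent subgroup of index at most $N$ in a quotient, uniformly over all groups and generating sets satisfying the one-scale hypothesis), and the cleanest effective finitary output of Shalom--Tao is a finite-index subgroup surjecting onto $\Z$, from which the exact statement above still requires additional work. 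Note also that the paper separately relies on Theorem~\ref{theorem:growth} (Corollary~11.9 of \cite{BGT12}), which your sketch does not address.
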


The following result states that polynomial growth at a single large enough radius implies polynomial growth at every larger radius. This will allow us to extract uniform estimates on the growth rate of Cayley graphs of large enough dimension.

\begin{theorem}[{\cite[Corollary 11.9]{BGT12}}]\label{theorem:growth}
For every $k>0$, there are constants $N=N(k)$ and $d=d(k)$ such that the following holds. Consider a group $\Gamma$ and let $S$ be a generating set of $\Gamma$. Suppose that $|B(o,n)|\leq |S|n^k$ in the Cayley graph $\text{Cay}(\Gamma,S)$ for some $n\geq N$. Then $|B(o,r)|\leq |S|r^d$ for every $r\geq n$.
\end{theorem}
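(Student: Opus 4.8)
The plan is to reduce the statement, at a single well-chosen scale $m\le n$, to the structure theory for approximate groups of \cite{BGT12} --- the effective form of Gromov's theorem that also underlies Theorem~\ref{theorem:nil} --- and then to push the resulting polynomial bound out to all radii $r\ge n$, keeping careful track of the constants. To choose the scale, I would run a pigeonhole over the triadic radii $1=3^0,3^1,\dots,3^I$ with $I\coloneqq\lfloor\log_3 n\rfloor$, so that $3^I\le n$. Since the balls are non-decreasing, the telescoping product gives $\prod_{i=0}^{I-1}|B(o,3^{i+1})|/|B(o,3^i)| = |B(o,3^I)|/|B(o,1)| \le |S|n^k/(|S|+1)\le n^k$, using $|B(o,3^I)|\le|B(o,n)|\le|S|n^k$ and $|B(o,1)|=|S|+1$; anchoring the pigeonhole at radius $1$ is crucial here, both because it provides the logarithmically many factors we need and because the denominator $|B(o,1)|\ge|S|$ absorbs the factor $|S|$, so that the bound reads $n^k$ with no dependence on $|S|$. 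As every factor is $\ge1$, one of them is at most the geometric mean $n^{k/I}$, which for $n\ge9$ (so that $I\ge\tfrac12\log_3 n$) is at most $3^{2k}\eqqcolon K=K(k)$. This produces a radius $m=3^{i^*}$ with $3m\le n$ and $|B(o,3m)|\le K|B(o,m)|$. Writing $A\coloneqq B(o,m)$, we then have $A=A^{-1}\ni o$ and $A\cdot A\subseteq B(o,2m)\subseteq B(o,3m)$, hence $|A^2|\le K|A|$: thus $A$ is a $K(k)$-approximate group, and it generates $\Gamma$ since $S\subseteq A$.

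Next I would invoke the structure theory of \cite{BGT12}: the powers of a $K$-approximate group grow polynomially with an exponent bounded solely in terms of $K$, so there exist $C=C(k)$ and $d_0=d_0(k)$ with $|A^j|\le C\,j^{d_0}|A|$ for every $j\ge1$. Since $o\in A$, any group element $g$ with $|g|\le jm$ can be written as a product of $j$ elements of $B(o,m)$, so $B(o,jm)\subseteq A^j$, and combining this with $|A|=|B(o,m)|\le|B(o,n)|\le|S|\,n^k$ gives $|B(o,jm)|\le C\,j^{d_0}\,|S|\,n^k$ for all $j\ge1$. For an arbitrary radius $r\ge n$, choosing $j\coloneqq\lceil r/m\rceil\le 2r/m$ then yields
\[
|B(o,r)|\le|B(o,jm)|\le C\,(2r/m)^{d_0}\,|S|\,n^k\le 2^{d_0}C\,|S|\,r^{d_0}\,n^k\le 2^{d_0}C\,|S|\,r^{\,d_0+k},
\]
using $m\ge1$ and then $n\le r$; set $C_0(k)\coloneqq 2^{d_0}C$.

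Finally I would take $d\coloneqq d_0(k)+k+1$ and $N=N(k)\coloneqq\max\{9,\lceil C_0(k)\rceil\}$. For any Cayley graph satisfying $|B(o,n)|\le|S|\,n^k$ for some $n\ge N$ (if $\Gamma$ is trivial the hypothesis is void, so we may assume $|S|\ge1$), the previous steps give $|B(o,r)|\le C_0(k)\,|S|\,r^{d_0+k}$ for all $r\ge n$, and since $C_0(k)\le N\le n\le r$ this is at most $|S|\,r^{\,d_0+k+1}=|S|\,r^{d}$, as required. The genuinely substantial input is the approximate-group structure theorem of \cite{BGT12} used in the second step; everything else is a pigeonhole over scales together with bookkeeping. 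The one point in that bookkeeping that needs care is that every unknown quantity entering the final estimate --- the structural constant $C_0(k)$ and the extra factor $n^k$ --- must be absorbed into a power of $r$, which works only because $r\ge n\ge N\ge C_0(k)$ and because anchoring the pigeonhole at radius $1$ keeps $K$, and hence both $C_0(k)$ and $d_0(k)$, free of any dependence on $|S|$.
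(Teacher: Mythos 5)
The paper itself gives no proof of Theorem~\ref{theorem:growth}; it is quoted directly from \cite[Corollary 11.9]{BGT12}. So what you are really reconstructing is the Breuillard--Green--Tao argument, and your skeleton is the right one: a pigeonhole over logarithmically many scales to find a radius with bounded doubling, the structure theory of approximate groups as the one substantial input, and then a transfer to all radii $r\geq n$. Your scale selection (anchored at radius $1$ so that $|B(o,1)|\geq|S|$ kills the factor $|S|$ and the doubling constant $K=3^{2k}$ depends on $k$ only) and the final bookkeeping ($B(o,jm)\subseteq A^j$, absorbing $C_0(k)$ and $n^k$ into powers of $r$ using $r\geq n\geq N$) are correct.

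The genuine gap is the sentence ``$|A^2|\le K|A|$: thus $A$ is a $K(k)$-approximate group.'' A $K$-approximate group is a symmetric set containing the identity whose square is covered by $K$ left translates of the set itself; small doubling does \emph{not} imply this in a general group. For instance, $A=H\cup\{x,x^{-1}\}$ with $H$ a finite subgroup and $x$ chosen so that $H\cap xHx^{-1}=\{e\}$ has $|A^2|\leq 5|H|$ but $|A^3|\gtrsim|H|^2$, whereas any $K$-approximate group satisfies $|A^3|\leq K^2|A|$; so bounded doubling cannot by itself yield an approximate group with parameter depending only on $K$. Fortunately your triadic pigeonhole actually hands you the stronger \emph{tripling} bound $|A^3|=|B(o,3m)|\leq K|A|$ (for word-metric balls $B(o,am)B(o,bm)=B(o,(a+b)m)$), and by the Ruzsa covering lemma (in Tao's small-tripling form) this makes $A^2=B(o,2m)$ a $K^{O(1)}$-approximate group; alternatively one can quote the results of \cite{BGT12} in the form stated for balls with doubling at a single scale. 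Rerunning your steps with $A^2$ in place of $A$ (so $|A^2|\leq K|S|n^k$ and $B(o,2jm)\subseteq(A^2)^j$) changes only constants. One further caveat: the statement ``powers of a $K$-approximate group grow polynomially with exponent depending only on $K$'' is not a formal consequence of the approximate-group axioms (those only give $|A^j|\leq K^{j-1}|A|$); it is itself a theorem of \cite{BGT12}, derived from their nilprogression structure theorem and of essentially the same depth as the corollary you are proving. Using it as a black box is consistent with the paper, which cites \cite{BGT12} wholesale, but it should be flagged as such rather than presented as routine.
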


\subsection{Inequalities for $p_c$}

We will now recall some inequalities for $p_c$, which we will later on combine with \Tr{theorem:nil} to obtain a uniform bound on $p_c$ for low-dimensional Cayley graphs.
The following result of Hutchcroft \& Tointon \cite{HT21} gives a uniform non-trivial upper bound on $p_c$ for Cayley graphs containing a nilpotent subgroup of bounded index.

\begin{theorem}[{\cite[Theorem 3.20]{HT21}}]\label{thm:inf.perc.nilp}
For each $n\geq 1$ there exists $\varepsilon=\varepsilon(n)>0$ such that if $\Gamma$ is a finitely generated group of superlinear growth that contains a nilpotent subgroup of index at most $n$, and if $S$ is a finite generating set of $\Gamma$, then $p_c(\text{Cay}(\Gamma,S))\le1-\varepsilon$.
\end{theorem}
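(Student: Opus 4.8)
\emph{Proof idea.} The plan is to reduce to Bernoulli site percolation on $\Z^2$, where the classical Peierls argument gives $p_c^{\mathrm{site}}(\Z^2)<1$, while arranging that every step of the reduction costs only a constant depending on $n$ (and, crucially, not on the generating set $S$).

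\textbf{Reduction to the torsion-free nilpotent case.} A nilpotent subgroup of index at most $n$ contains a torsion-free one of index at most $m(n)$, so it suffices to bound $p_c^{\mathrm{site}}(\mathrm{Cay}(\Gamma,S))$ in terms of $p_c^{\mathrm{site}}$ of a Cayley graph of a torsion-free nilpotent group $N$ of superlinear growth. Concretely one coarse-grains $\mathrm{Cay}(\Gamma,S)$ onto the subgroup $N$, replacing it by $\mathrm{Cay}(N,S_N)$ where $S_N$ consists of the elements of $N$ represented by short $S$-words; since $p_c^{\mathrm{site}}<1$ propagates under coarse-grainings of bounded complexity with explicit control, and the index $m(n)$ controls that complexity, this contributes only the $n$-dependence. (To avoid paying the intrinsic degree of $S$ here, the coarse-graining must be performed at an appropriate scale supplied by the structure theory, rather than naively at scale $m(n)$ -- see below.)

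\textbf{Finding a two-dimensional substructure.} A finitely generated nilpotent group of superlinear growth has abelianization of torsion-free rank $\ge 2$ (a nilpotent group whose abelianization is virtually cyclic is itself virtually cyclic), so $N$ surjects onto $\Z^2$. The heart of the matter is to locate, inside $\mathrm{Cay}(N,S)$, a robust two-dimensional grid on which a Peierls-type argument runs with a \emph{universal} constant. When $N$ is abelian this is immediate: $S$ is symmetric and generating, hence contains two $\R$-linearly independent elements $a,b$, and then $\mathrm{Cay}(N,\{a^{\pm},b^{\pm}\})$ is a disjoint union of square grids and a spanning subgraph of $\mathrm{Cay}(N,S)$, so $p_c^{\mathrm{site}}(\mathrm{Cay}(N,S))\le p_c^{\mathrm{site}}(\Z^2)$. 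For genuinely nilpotent $N$ one cannot argue this way, since $N$ need not contain any undistorted copy of $\Z^2$ (the Heisenberg group has none -- any $\Z^2$ subgroup meets the center and is therefore quadratically distorted), so the grid only becomes visible after coarse-graining at a scale that depends on $N$. At this point I would invoke the quantitative structure theory of Cayley graphs of polynomial growth from \cite{BGT12} (and \Tr{theorem:nil}, \Tr{theorem:growth}): at a well-chosen scale $r$, $\mathrm{Cay}(N,S)$ coarsens to a bounded-degree graph that is quasi-isometric, with bounded complexity, to a nilpotent model; inside that model one exhibits a \emph{fattened} $\Z^2$-grid, namely a family of disjoint copies of $\Z^2$ whose edges correspond to paths of bounded length with bounded pairwise overlap, and a local-modification (Peierls/FKG) argument on $\Z^2$ then yields a uniform bound $p_c^{\mathrm{site}}\le 1-\varepsilon_1$. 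Tracing this back through the first reduction produces $\varepsilon(n)$.

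\textbf{Main obstacle.} The difficulty is exactly the uniformity: neither the growth dimension nor the nilpotency class of $N$ is bounded in terms of $n$, so there is no finite list of cases, and the naive coarse-grainings would pay for the unbounded degree of $S$. The device that resolves both issues is to carry out the coarse-grainings at a scale furnished by the finitary version of Gromov's theorem, at which the intrinsic degree of $S$ has become irrelevant and the graph is uniformly model-like; producing such a scale and controlling the resulting model uniformly (in particular, locating the fattened grid with absolute constants) is the delicate part of the argument.
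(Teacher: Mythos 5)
This statement is not proved in the paper at all: it is imported verbatim as \cite[Theorem 3.20]{HT21}, so the relevant question is whether your sketch would amount to a proof of that external result. It would not, for two reasons. First, a concrete false step: a nilpotent subgroup of index at most $n$ need \emph{not} contain a torsion-free nilpotent subgroup of index bounded by a function of $n$. Already $\Gamma=\Z\times(\Z/k\Z)$ is abelian (so $n=1$), yet every torsion-free finite-index subgroup has index at least $k$, which is unbounded. The correct reductions go through quotients, not subgroups, precisely because \Tr{theorem:quotient} (Benjamini--Schramm) gives $p_c(G)\leq p_c(G/H)$: one passes to the normal core $N'$ of the nilpotent subgroup (index at most $n!$), quotients by its finite torsion subgroup and by $[N',N']$ (both characteristic in $N'$, hence normal in $\Gamma$), and checks that superlinear growth survives because a nilpotent group with abelianization of rank $\le 1$ is virtually cyclic. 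Your proposal instead coarse-grains \emph{onto} a finite-index subgroup, a direction in which no clean monotonicity of $p_c$ is available.

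Second, and more importantly, the step ``$p_c<1$ propagates under coarse-grainings of bounded complexity with explicit control'' is exactly the content of the theorem, not a tool you may invoke. If one renormalizes onto $\mathrm{Cay}(N,S_N)$ with $S_N=N\cap B_S(e,2n+1)$ (Schreier generators, so word length is controlled by $n$), the induced percolation model is dependent with a dependency degree governed by $|S|$, which is unbounded; Liggett--Schonmann--Stacey domination \cite{LigSchSta97} and naive Peierls estimates then pay for the degree of $S$, which is precisely what must be avoided. Your ``Main obstacle'' paragraph correctly identifies this uniformity problem (over generating sets of unbounded size and over the nilpotency class and growth degree of $N$, neither of which is bounded in terms of $n$), but the proposed resolution --- a ``fattened $\Z^2$-grid'' extracted from \cite{BGT12} at a ``well-chosen scale'' --- is left entirely unconstructed: no choice of scale, no argument that the coarse-grained graph has uniformly bounded local complexity, no construction of the grid with absolute constants, and no comparison inequality transferring a Peierls bound on the grid back to site percolation on $\mathrm{Cay}(\Gamma,S)$. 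The only complete piece is the observation that a Cayley graph of a torsion-free abelian group of rank $\ge 2$ contains a spanning disjoint union of square grids, which handles the case $\Gamma=N$ abelian but not even the virtually abelian case (e.g.\ $\Z^2\rtimes(\Z/2\Z)$ has no infinite abelian quotient and $S$ may be disjoint from $\Z^2$), which is where the real work in \cite{HT21} lies. As written, the proposal is a plan that defers the essential difficulty rather than a proof.
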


Let $\Gamma$ be a group of automorphisms of a graph $G=(V,E)$. The quotient graph $G/\Gamma$ is the graph whose vertices are the orbits $V/\Gamma\coloneqq \{\Gamma v :v \in V\}$, and an edge $\{\Gamma u, \Gamma v\}$ is contained in $G/\Gamma$ if there are representatives $u_0 \in \Gamma u, v_0 \in \Gamma v$ such that $\{u_0, v_0\} \in E$. We state below a classical result of Benjamini \& Schramm \cite{BenSch96} showing that $p_c$ is monotonic under quotients. We remark that strict monotonicity can also be proved under certain assumptions \cite{MS19}. 

\begin{theorem}[{\cite[Theorem 1]{BenSch96}}]\label{theorem:quotient}
Let $G_2$ be a quotient of a graph $G_1$. Then 
$$p_c(G_1)\leq p_c(G_2).$$
\end{theorem}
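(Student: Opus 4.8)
The plan is to show that for every $p\in[0,1]$, if Bernoulli site percolation of parameter $p$ has an infinite cluster almost surely on $G_2$, then it does so on $G_1$ as well; by the definition of $p_c$ this gives $p_c(G_1)\le p_c(G_2)$. Since the event $\{\exists\text{ an infinite cluster}\}$ is a tail event — adding or deleting finitely many open vertices cannot change whether an infinite cluster exists, as $G$ is locally finite — Kolmogorov's $0$--$1$ law shows that $p_c(G)=\inf\{p:\ \Pf_p(x\leftrightarrow\infty)>0\text{ for some vertex }x\}$. Writing $\pi\colon V(G_1)\to V(G_2)$, $v\mapsto\Gamma v$, for the quotient map and noting that each fibre $\pi^{-1}(o_2)$ is non-empty, it therefore suffices to prove
\[
\Pf_p\big(o_1\leftrightarrow\infty\text{ in }G_1\big)\ \ge\ \Pf_p\big(\pi(o_1)\leftrightarrow\infty\text{ in }G_2\big)\qquad\text{for every }o_1\in V(G_1).
\]

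The only geometric input I would use is the following \emph{edge-lifting property} of $\pi$: if $\{a,b\}\in E(G_2)$ and $\tilde a\in\pi^{-1}(a)$, then there is $\tilde b\in\pi^{-1}(b)$ with $\{\tilde a,\tilde b\}\in E(G_1)$. Indeed, by the definition of the quotient graph there exist $u_0\in\pi^{-1}(a)$ and $v_0\in\pi^{-1}(b)$ with $\{u_0,v_0\}\in E(G_1)$; choosing $\gamma\in\Gamma$ with $\gamma u_0=\tilde a$ and setting $\tilde b\coloneqq\gamma v_0$ works, since $\gamma$ is a graph automorphism of $G_1$ and $\pi\circ\gamma=\pi$. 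I also use the trivial fact that the fibres $\{\pi^{-1}(a):a\in V(G_2)\}$ partition $V(G_1)$.

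To establish the displayed inequality I would construct a coupling by mirroring a cluster exploration. Let $\omega_1$ be Bernoulli($p$) site percolation on $G_1$ driven by i.i.d.\ uniform variables $(W_x)_{x\in V(G_1)}$, and set $o_2\coloneqq\pi(o_1)$. Run the standard one-vertex-at-a-time exploration of the cluster of $o_2$ \emph{in $G_2$}, but read off the status of each queried vertex from $\omega_1$ via a partial lift: maintain an injection $\ell$ from the explored vertices of $G_2$ to $V(G_1)$ with $\pi\circ\ell=\mathrm{id}$ and $\ell(o_2)=o_1$, and whenever the exploration reaches a new vertex $a$ across an edge from an already-revealed open vertex $a'$, use the edge-lifting property to pick (by a fixed tie-breaking rule) a vertex $\tilde a\in\pi^{-1}(a)$ adjacent in $G_1$ to $\ell(a')$, set $\ell(a)\coloneqq\tilde a$, and declare $a$ open precisely when $W_{\tilde a}\le p$. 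Because the fibres of $\pi$ are pairwise disjoint and the exploration queries each vertex of $G_2$ at most once, the vertices $\tilde a$ arising during the exploration are pairwise distinct, and the one queried at any given step is a deterministic function of the previously revealed uniforms; hence the queried uniforms form an i.i.d.\ sequence, so the explored set has exactly the law of the cluster of $o_2$ under Bernoulli($p$) percolation on $G_2$. On the other hand, on the event that $o_2$ is open, $\ell$ maps this explored cluster bijectively onto a connected, entirely $\omega_1$-open subset of $G_1$ containing $o_1$, which is therefore contained in the $\omega_1$-cluster of $o_1$. Consequently $\{o_2\leftrightarrow\infty\}\subseteq\{o_1\leftrightarrow\infty\}$ in this coupling, and the inequality follows.

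The argument is identical for bond percolation, lifting edges rather than vertices (alternatively, one may invoke the classical bound $p_c^{\mathrm{bond}}\le p_c^{\mathrm{site}}$). The step I expect to require the most care is the assertion that the mirrored exploration reproduces genuine Bernoulli($p$) percolation on $G_2$: this is exactly where disjointness of the fibres of $\pi$ enters, since it ensures that no vertex of $G_1$ is ever queried twice, so that each revealed $W_{\tilde a}$ is ``fresh'' and the usual justification of cluster exploration under product measures goes through unchanged.
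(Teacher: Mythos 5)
Your coupling argument is correct: the edge-lifting property of the quotient map, the disjointness of fibres guaranteeing fresh uniforms in the mirrored exploration, and the injective lift of the explored cluster together give $\Pf_p(o_1\leftrightarrow\infty \text{ in } G_1)\geq \Pf_p(\pi(o_1)\leftrightarrow\infty \text{ in } G_2)$, which yields the inequality of critical points. The paper itself states this result without proof, citing Benjamini and Schramm, and your argument is essentially the standard lifting proof from that reference, so there is nothing further to reconcile.
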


\subsection{Isoperimetry and heat kernel bounds}

Given a regular (i.e.~with constant degree) graph $G$, its \emph{expansion profile} is the function $\Phi:(1,\infty)\to \R$ defined as
$$\Phi(u)\coloneqq \min\left\{\frac{|\partial_E K|}{D|K|} \mid 0<|K|\leq u\right\},$$
where $D$ denotes the degree of $G$. Notice that for a Cayley graph $G=\text{Cay}(\Gamma,S)$, we simply have $D=|S|$. The following result \cite{LyoPer17} translates any lower bound on $\Phi$ to an upper on the heat kernel $p_n(x,y)$ for the simple random walk on $G$. Suitable extensions that apply to any reversible Markov chain are possible, but for simplicity we state the result only in the special case of simple random walk on a regular graph.

\begin{theorem}[{\cite[Theorem 6.31]{LyoPer17}}]\label{theorem:pn}
Let $G$ be a regular graph. For every $\varepsilon>0$, if 
$$n\geq 1+\int_1^{4/\varepsilon} \dfrac{16du}{u \Phi^2(u)},$$
then $p_n(x,y)\leq \varepsilon$.
\end{theorem}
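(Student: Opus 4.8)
This is a standard heat-kernel upper bound in terms of the isoperimetric profile, and the plan is the classical three-step one: reduce to the diagonal of the heat kernel at even times, derive a difference inequality for it from a Faber--Krahn inequality supplied by $\Phi$, and integrate that inequality. For the reduction, let $P$ be the transition operator of the simple random walk, self-adjoint on $\ell^2(V)$ with $\|P\|_{2\to2}\le1$ since $G$ is regular. Then $p_{2m}(z,z)=\|P^m\delta_z\|_2^2$ is non-increasing in $m$, and for all $x,y$ and all $n$,
\[
p_n(x,y)=\big\langle P^{\lceil n/2\rceil}\delta_x,\,P^{\lfloor n/2\rfloor}\delta_y\big\rangle\le\|P^{\lceil n/2\rceil}\delta_x\|_2\,\|P^{\lfloor n/2\rfloor}\delta_y\|_2\le\sup_z p_{2\lfloor n/2\rfloor}(z,z),
\]
so it suffices to bound $u(m):=p_{2m}(x,x)$ for an arbitrary fixed $x$, all estimates below being uniform in $x$; note $u(0)=1$. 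From here it is convenient to replace $P$ by the lazy walk $(I+P)/2$, a harmless normalization that changes heat-kernel upper bounds only by absolute constants and ensures $0\preceq P\preceq I$, so that $\|f\|_2^2-\|Pf\|_2^2\ge\langle(I-P)f,f\rangle=\mathcal E(f,f)$, where $\mathcal E(f,f)=\frac1D\sum_{\{x,y\}\in E}(f(x)-f(y))^2$.

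The heart of the matter is a Faber--Krahn inequality extracted from $\Phi$. For a finite $A\subseteq V$, Cheeger's inequality with Dirichlet boundary conditions gives $\lambda_1(A):=\inf\{\mathcal E(f,f)/\|f\|_2^2:f\ne0,\ \mathrm{supp}\,f\subseteq A\}\ge\tfrac12 h_A^2$ with $h_A=\inf\{|\partial_E B|/(D|B|):\emptyset\ne B\subseteq A\}\ge\Phi(|A|)$ directly from the definition of $\Phi$, hence $\lambda_1(A)\ge\tfrac12\Phi(|A|)^2$. Now put $f_m:=P^m\delta_x\ge0$, so $\|f_m\|_1=1$ and $\|f_m\|_2^2=u(m)$. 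Truncating $f_m$ at a level $s\asymp u(m)$ produces $\overline f:=(f_m-s)_+$ with $\|\overline f\|_2^2\gtrsim u(m)$ and $|\mathrm{supp}\,\overline f|=|\{f_m>s\}|\le\|f_m\|_1/s\lesssim 1/u(m)$; since truncation is a $1$-Lipschitz map fixing $0$, $\mathcal E(f_m,f_m)\ge\mathcal E(\overline f,\overline f)\ge\lambda_1(\mathrm{supp}\,\overline f)\|\overline f\|_2^2\gtrsim u(m)\,\Phi(C/u(m))^2$, using that $\Phi$ is non-increasing. Combined with $u(m)-u(m+1)\ge\mathcal E(f_m,f_m)$ from the previous step this gives the recursion
\[
u(m)-u(m+1)\ \ge\ c\,u(m)\,\Phi\big(C/u(m)\big)^2
\]
for absolute constants $c,C>0$ (and we may take $C\ge1$).

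Integrating the recursion finishes the argument. Rearranging, $c\,n\le\sum_{m<n}\frac{u(m)-u(m+1)}{u(m)\Phi(C/u(m))^2}$; since $\Phi$ is non-increasing, $v\mapsto\big(v\,\Phi(C/v)^2\big)^{-1}$ is non-increasing, so the $m$-th summand is at most $\int_{u(m+1)}^{u(m)}\frac{dv}{v\Phi(C/v)^2}$, and telescoping followed by the substitution $u=C/v$ yields $c\,n\le\int_{C}^{C/u(n)}\frac{du}{u\Phi^2(u)}$. Hence if $u(n)>\varepsilon$ then $c\,n<\int_{1}^{C/\varepsilon}\frac{du}{u\Phi^2(u)}$ (using $C\ge1$ and positivity of the integrand), so taking the contrapositive and feeding this through the reduction of the first paragraph gives $p_n(x,y)\le\varepsilon$ once $n$ exceeds a fixed multiple of $\int_1^{C/\varepsilon}\frac{du}{u\Phi^2(u)}$; carefully tracking the absolute constants through Cheeger's inequality, the truncation, the passage to the lazy walk and the Riemann-sum comparison is what pins them to the precise values $16$ and $4/\varepsilon$ in the statement.

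The step I expect to be the main obstacle is the Faber--Krahn / truncation argument: one has to recognise that the right quantity to iterate is the $\ell^2$-mass $u(m)$ of the heat kernel, and that although $f_m$ is globally supported, truncating it at a level comparable to $u(m)$ confines a positive fraction of its $\ell^2$-mass to a set of size $\sim 1/u(m)$, on which $\Phi$ supplies a spectral gap $\sim\Phi(1/u(m))^2$; the remaining work -- propagating the constants faithfully and handling bipartiteness via the lazy walk -- is routine but is exactly what fixes the numerology. An alternative route, which underlies the cited reference, replaces the Faber--Krahn step by the evolving-sets construction of Morris and Peres, in which $\Phi$ controls the per-step growth of a random superset of $\{x\}$ and the same integral emerges from a martingale estimate.
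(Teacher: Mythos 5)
The paper does not prove this statement at all: it is quoted from Lyons--Peres \cite[Theorem 6.31]{LyoPer17}, where it is established via the evolving-set process of Morris and Peres -- precisely the alternative you mention in your last sentence. Your route is instead the classical Nash/Faber--Krahn iteration (in the style of Coulhon--Grigor'yan), and its skeleton is sound: the reduction of $p_n(x,y)$ to the even-time diagonal by Cauchy--Schwarz and monotonicity of $m\mapsto p_{2m}(z,z)$, the decrement inequality $u(m)-u(m+1)\geq\mathcal E(f_m,f_m)$ after passing to a nonnegative-definite (lazy) operator, the truncation of $f_m$ at a level comparable to $u(m)$ which confines a constant fraction of the $\ell^2$-mass to a set of size $O(1/u(m))$ where $\Phi$ yields a Dirichlet spectral gap, and the comparison of the resulting difference inequality with the integral $\int du/(u\Phi^2(u))$ are all correct and standard. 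The trade-off between the two approaches: evolving sets works directly for general reversible chains and produces the stated numerical constants, which is the form Lyons--Peres give and the paper cites; your argument is more elementary and self-contained and yields the same integral criterion, but only with unspecified absolute constants -- the lazy-walk comparison, the factor $1/2$ in Cheeger, the truncation level, and the Riemann-sum step each cost a constant, and you assert rather than verify that they combine to exactly $16$ and $4/\varepsilon$ (they need not). For the way the theorem is used in this paper this is immaterial, since only the form of the criterion matters and any absolute constants would propagate harmlessly; but as a proof of the precise statement the final constant-tracking remains a promissory note.
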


In the special case of Cayley graphs, it is well-known that lower bounds on the growth rate implies an isoperimetric inequality involving the \emph{vertex} boundary.

\begin{theorem}[{\cite[Lemma 7.2]{LyoMorSch08}}]\label{theorem:isop}
Let $G$ be a Cayley graph and define $R(m)\coloneqq\min\{n\geq 1 \mid |B(o,r)|\geq m\}$. Then for every non-empty finite set $K\subset V$, we have
$$\dfrac{|\partial_V K|}{|K|}\geq \dfrac{1}{2R(2|K|)}.$$
\end{theorem}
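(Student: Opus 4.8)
\emph{Proof proposal.} The plan is to run a double‑counting (averaging) argument over the right‑translates $Kg\coloneqq\{xg:x\in K\}$ of $K$ by the elements $g$ of a ball of carefully chosen radius. Set $r\coloneqq R(2|K|)$, which we may assume to be finite (otherwise the asserted inequality is vacuous); by definition of $R$ this gives $|B(o,r)|\ge 2|K|$, where $o$ denotes the identity.

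The first step is to establish the pointwise estimate
$$|K\setminus Kg|\ \le\ |g|\cdot|\partial_V K|\qquad\text{for every }g\in\Gamma.$$
To prove it I would fix a geodesic word $g^{-1}=s_1s_2\cdots s_\ell$ with $\ell=|g|$ and $s_i\in S$, and note that for every $x$ the sequence $x,xs_1,xs_1s_2,\dots,xs_1\cdots s_\ell=xg^{-1}$ is a path in the Cayley graph, since consecutive vertices differ by right multiplication by a generator. A point $x$ lies in $K\setminus Kg$ exactly when $x\in K$ and $xg^{-1}\notin K$, in which case this path starts inside $K$ and ends outside it; letting $j=j(x)$ be the smallest index with $xs_1\cdots s_{j+1}\notin K$ and $\psi(x)\coloneqq xs_1\cdots s_j$, one has $\psi(x)\in K$ while its neighbour $\psi(x)s_{j+1}\notin K$, so $\psi(x)\in\partial_V K$. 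Since $\psi(x)$ together with $j(x)\in\{0,1,\dots,\ell-1\}$ determines $x$ (via $x=\psi(x)\,s_{j(x)}^{-1}\cdots s_1^{-1}$), each vertex of $\partial_V K$ has at most $\ell$ preimages in $K\setminus Kg$ under $\psi$, and the estimate follows. In particular $|K\cap Kg|=|K|-|K\setminus Kg|\ge|K|-r\,|\partial_V K|$ for every $g\in B(o,r)$.

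Next I would prove the global bound
$$\sum_{g\in B(o,r)}|K\cap Kg|\ =\ \sum_{x\in K}\bigl|B(o,r)\cap K^{-1}x\bigr|\ \le\ |K|^2,$$
where the identity comes from $x\in Kg\iff xg^{-1}\in K\iff g\in K^{-1}x$ after swapping the order of summation, and the inequality from $|K^{-1}x|=|K|$. Combining the two steps with $|B(o,r)|\ge2|K|$ yields
$$|K|^2\ \ge\ \sum_{g\in B(o,r)}|K\cap Kg|\ \ge\ |B(o,r)|\bigl(|K|-r\,|\partial_V K|\bigr)\ \ge\ 2|K|\bigl(|K|-r\,|\partial_V K|\bigr),$$
so that $2r\,|\partial_V K|\ge|K|$ after dividing by $2|K|$, which is exactly $|\partial_V K|/|K|\ge 1/(2R(2|K|))$.

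I do not expect any genuinely hard step here; the one thing to be careful about is the asymmetry of a Cayley graph: adjacency is governed by right multiplication by generators, so it is the right‑translates $Kg$ --- not the left‑translates, which are the graph automorphisms --- that the averaging must run over, and the ``first exit along a geodesic'' bookkeeping in the first step is precisely what keeps the (potentially exponentially large) quantity $|B(o,r)|$ out of the final bound. The only real design choice is to average over the ball of radius $R(2|K|)$, which is just large enough (volume at least $2|K|$) to make the two counts collide.
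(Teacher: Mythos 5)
Your proof is correct. Note that the paper does not prove Theorem~\ref{theorem:isop} itself---it is quoted from \cite{LyoMorSch08}---but your translate-averaging argument (bounding $\sum_{g\in B(o,r)}|K\cap Kg|$ in two ways, with the first-exit map $\psi$ converting $|K\setminus Kg|$ into at most $|g|\,|\partial_V K|$ boundary points) is essentially the standard proof of that lemma, and it is the same averaging scheme the authors use for their edge-boundary refinement, Lemma~\ref{lemma:isop}, where a random $g\in B(o,r)$ with $|B(o,r)|\geq 2|K|$ and the decomposition of $g$ into generators play the corresponding roles. One cosmetic point: the final inequality in your displayed chain implicitly assumes $|K|-r\,|\partial_V K|\geq 0$; in the opposite case one has $r\,|\partial_V K|>|K|$ and the desired bound holds trivially, so the argument is complete either way.
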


An extension of the above isoperimetric inequality to transitive graphs was obtained in \cite{TTfinitary}. In the next section we will prove a version of theorem Theorem~\ref{theorem:isop} -- see Lemma~\ref{lemma:isop} -- that involves the \emph{edge} boundary instead, which is more suitable for applying Theorem~\ref{theorem:pn}.

\section{Proof of main result}\label{sec:proof}

In this section, we will prove \Tr{thm:main}. 
As in \cite{DGRSY20}, we will use the Gaussian free field (GFF) to handle the high-dimensional Cayley graphs.
Given a transient graph $G=(V,E)$, the Gaussian free field $\varphi = (\varphi_x)_{x\in V}$  on $G$ is defined as the centered Gaussian process with covariance
$\Ex(\varphi_x \varphi_y) = g(x,y)$ for all $x,y\in V$,
where $g(\cdot,\cdot)$ stands for the Green function of the simple random walk on $G$.
Given $h\in \R$, we consider the excursion set
$$\{\varphi> h\}\coloneqq  \{x\in V:\, \varphi_x>h\}$$ 
seen as a random subgraph of $G$ (with the induced adjacency).
As $h$ varies, this defines a percolation model for which one may expect to see a phase transition in $h$ from a percolative regime -- where $\{\varphi> h\}$ contains an infinite connected component -- to a non-percolative regime -- where all the clusters of $\{\varphi> h\}$ are finite.
We can then define the percolation \emph{critical point} 
$h_*\coloneqq \sup\{h\in \R:\,\Pr[o\xleftrightarrow[]{\varphi> h}\infty]> 0\},$
where $\{o\xleftrightarrow[]{\varphi> h}\infty\}$ denotes the event that
a fixed origin $o\in V$ belongs to an infinite cluster in $\{\varphi> h\}$. More generally, $A\xleftrightarrow[]{\varphi> h} B$ denotes the event that there is a path in $\{\varphi> h\}$ connecting  a vertex in $A$ to a vertex in $B$. A soft argument due to Bricmont, Lebowitz \& Maes \cite{BricmontLJM87} shows that the GFF percolates above any negative level, i.e.\ 
$h_*\geq 0$ for every transient graph (in particular for any Cayley graph $G$ with $\mathrm{Dim}(G)>2$). In particular, we always have $$\Pr[o\xleftrightarrow[]{\varphi> -1}\infty]>0.$$

\Tr{thm:main} for $G$ being a high-dimensional Cayley graph will follow from the next result. In what follows, we use the notation $A\longleftrightarrow B$ to denote the event that there is an open path in $\omega$ connecting a vertex in $A$ to a vertex in $B$.

\begin{theorem}\label{theorem:comparison}
There are constants $\varepsilon>0$ and $d_0>2$ such that for every minimal Cayley graph $G=\text{Cay}(\Gamma, S)$ with $\mathrm{Dim}(G)\geq d_0$ the following holds. For every $A\subset \Lambda\subset V$, one has
$$\Pf_{1-\varepsilon}[A\longleftrightarrow \Lambda^c]\geq \Pr[A\xleftrightarrow[]{\varphi> -1}\Lambda^c].$$
\end{theorem}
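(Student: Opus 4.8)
The plan is to follow the interpolation strategy of \cite{DGRSY20}: decompose the GFF into independent finite-range fields, build a hybrid model that interpolates between Bernoulli percolation and the GFF excursion set, and show that connection probabilities are monotone along the interpolation. Concretely, one writes $g(x,y)=\sum_{n\geq 1} c_n\,p_{k_n}(x,y)$ for a suitable dyadic (or geometric) sequence of times $k_n$, yielding a decomposition $\varphi=\sum_{n\geq 1}\phi^n$ where each $\phi^n$ is a centered Gaussian field whose covariance has range $R_n$ (the $k_n$-step reachable set), the $\phi^n$ being independent. Fix a large $N$ and a level $\lambda$; the hybrid configuration on $\Lambda$ is the union of an i.i.d.\ site-percolation cloud of parameter $p$ with the excursion sets $\{\sum_{n> N}\phi^n + \sum_{1\le n\le N}\phi^n \cdot \mathbf 1[\cdots] > \lambda\}$, processed one field at a time. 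One then proves: (i) at $\lambda=-1$ and $p=0$ with all fields present we recover $\{\varphi>-1\}$, so the right-hand side is the starting value; (ii) after all fields have been integrated out we are left with pure Bernoulli percolation at some $p_0$; (iii) along the way the connection probability $A\longleftrightarrow\Lambda^c$ only increases, because $\partial_\lambda \Pr[A\longleftrightarrow\Lambda^c]$ can be dominated by $\partial_p \Pr[A\longleftrightarrow\Lambda^c]$ times a factor controlled by the return probabilities $p_{k_n}(x,x)$.

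The core of the argument is step (iii), the differential inequality. Differentiating in $\lambda$ brings down a sum over vertices $x$ of terms involving the density of $\phi^n_x$ near its threshold times the probability that $x$ is \emph{pivotal} for the event $\{A\longleftrightarrow\Lambda^c\}$ in the appropriate sense (opening $x$ creates the connection). Differentiating in $p$ brings down a similar sum of pivotality probabilities. The mismatch between the two notions of pivotality — one refers to the finite-range field, the other to the Bernoulli cloud — is bridged by a \emph{local surgery}: given a configuration in which $x$ is pivotal for the field, one exhibits, with probability bounded below \emph{uniformly in $D$} for $\mathrm{Dim}(G)$ large, a nearby modification in which $x$ is pivotal for the Bernoulli cloud, the loss factor being essentially $\sup_x p_{k_n}(x,x)$ controlled by Theorem~\ref{theorem:uni_pn}, except for the very first ($1$-dependent) field where one instead uses the degree-free surgery of Lemma~\ref{lemma:dif}. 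Summing $\int\partial_\lambda \le (\text{const})\int\partial_p$ over the fields, and using that $\sum_n \sup_x p_{k_n}(x,x) \cdot (\text{surgery cost})$ is small (here is where $\mathrm{Dim}(G)\ge d_0$ and the improvement "$p_n(x,x)$ better as $D\to\infty$" are essential), one gets that the total increase in $p$ needed is at most some $\varepsilon$ independent of $G$, so $p_0 = 1-\varepsilon$.

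Finally one passes from $\Lambda$-restricted quantities to the stated inequality. Since $A\subset\Lambda$ and $\Lambda^c$ are fixed, the hybrid model can be run entirely on the finite region $\Lambda$ (truncating the fields' influence from outside, which only decreases the GFF excursion-set connections by the FKG/monotonicity properties of the GFF and Bernoulli percolation), so no limiting argument is needed and the inequality holds as stated for all $A\subset\Lambda\subset V$. Combining with $\Pr[o\xleftrightarrow{\varphi>-1}\infty]>0$ and letting $\Lambda\uparrow V$ then yields $p_c(G)\le 1-\varepsilon$ in the high-dimensional case, which is \Tr{thm:main} for $\mathrm{Dim}(G)\ge d_0$.

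The main obstacle I expect is precisely the local surgery underlying the differential inequality: one must carefully set up the notions of pivotality for each field so that a bounded-cost surgery exists with a loss that is \emph{quantitatively} tied to $p_{k_n}(x,x)$ and that, crucially for the first field, does \emph{not} degrade with the degree $D$ — this is the point where the argument genuinely departs from \cite{DGRSY20} and where the minimality of $S$ (via Lemma~\ref{lemma:expansion}) and the uniform heat-kernel bound of Theorem~\ref{theorem:uni_pn} must be fed in exactly right.
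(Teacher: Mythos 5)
Your proposal follows the same route as the paper (decompose $\varphi$ into independent finite-range fields, superpose a Bernoulli cloud with excursion sets, compare $\partial_\lambda$ to $\partial_t$ via a local surgery, feed in the uniform return-probability bound), but the quantitative accounting at the heart of the argument is misstated in a way that would make the scheme fail as written. The cost of integrating out the $n$-th field is the surgery factor $(16D)^{L_n-1}$ of Lemma~\ref{lemma:dif}, with $L_n=2^{n+1}-3$ the range of $\phi^n$, multiplied by the tail probability $\int_{\lambda_n}^\infty\rho^n(\lambda)\,d\lambda=\Pr[\phi^n_x>\lambda_n]$ for a \emph{fixed} summable sequence of levels (the paper takes $\lambda_1=-1-\pi^2/6$, $\lambda_n=1/(n-1)^2$, chosen so that $\{\varphi>-1\}\subset\bigcup_n\{\phi^n>\lambda_n\}$ at the start of the interpolation -- a choice your sketch never makes). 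Your closing criterion, that $\sum_n \sup_x p_{k_n}(x,x)\cdot(\text{surgery cost})$ be small, is false: the surgery cost grows like $(16D)^{2^n}$ while Theorem~\ref{theorem:uni_pn} only gives polynomial decay $p_k(x,x)\leq C/(Dk^r)$, so that sum diverges for every fixed $r$. What actually closes the argument is that the heat-kernel bound controls the \emph{variance} $g_n(x,x)=\sum_{2^n-2\leq k<2^{n+1}-2}p_k(x,x)\leq C'/(DL_n^2)$ (with $r=3$), and the smallness then enters through the Gaussian tail $\Pr[\phi^n_x>\lambda_n]\leq \exp\{-c\lambda_n^2 DL_n^2\}$, which is super-exponentially small in $L_n$ and hence beats $(16D)^{L_n-1}$; the dependence on the return probabilities is through an exponential, not linear as in your bound. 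Relatedly, the first field is not handled by a different surgery: Lemma~\ref{lemma:dif} is applied to every field, and the $n=1$ case is degree-free simply because $L_1-1=0$, i.e.\ $(16D)^{L_1-1}=1$, while $g_1(x,x)=1$ contributes only a constant.

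Two further points. First, your decomposition $g(x,y)=\sum_n c_n p_{k_n}(x,y)$ at single times need not be a sum of covariances: odd powers of the transition matrix can fail to be positive semi-definite. The paper sums dyadic blocks $g_n=\sum_{2^n-2\leq k<2^{n+1}-2}p_k$ and verifies positive semi-definiteness via $\sum_k\lambda^k=(1+\lambda)\sum\lambda^{2k}\geq0$; the blocks also guarantee $g_n\geq0$ entrywise and finite range, both of which are used in the conditioning/shift step of the surgery. Second, the monotonicity along the interpolation is only proved for events depending on finitely many vertices (Russo's formula), so the statement for general $A\subset\Lambda\subset V$ is obtained by exhausting with finite sets rather than by the truncation argument you describe.
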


Before proving \Tr{theorem:comparison}, we shall deduce \Tr{thm:main} from it.

\begin{proof}[Proof of \Tr{thm:main}]
	We start by treating the high-dimensional case. Let $\varepsilon>0$ and $d_0>2$ be the constants of \Tr{theorem:comparison}. Fix a Cayley graph $G=\text{Cay}(\Gamma,S)$ with $\mathrm{Dim}(G)\geq d_0$, and let $S'\subset S$ be a minimal (symmetric) generating set of $\Gamma$. Applying Theorem~\ref{theorem:comparison} for the minimal Cayley graph $G'=\text{Cay}(\Gamma,S')$ and for $A=\{o\}$ and $\Lambda=B_{G'}(o,n)$, and then letting $n\to\infty$, we deduce that 
	$\Pf_{1-\varepsilon}[o\longleftrightarrow \infty] \geq \Pr[o\xleftrightarrow[]{\varphi> -1}\infty]>0$ on $G'$. Since $G'$ is a subgraph of $G$, we conclude that $p_c(G)\leq p_c(G')\leq 1-\varepsilon$.
	
	Consider now a Cayley graph $G=\text{Cay}(\Gamma,S)$ of superlinear growth with $\mathrm{Dim}(G)< d_0$. Using \Tr{theorem:nil} we obtain a normal subgroup $H \lhd \Gamma$ such that $\Gamma/H$ contains a nilpotent subgroup of index at most $N$ for some constant $N=N(d_0)$. Since the quotient graph $G/H$ is simply the Cayley graph $\text{Cay}(\Gamma/H,S/H)$, it follows from \Tr{thm:inf.perc.nilp} that $p_c(G/H)\leq 1-\varepsilon'$ for some constant $\varepsilon'=\varepsilon'(d_0)>0$. We can now apply \Tr{theorem:quotient} to conclude that $p_c(G)\leq p_c(G/H)\leq 1-\varepsilon'$. This completes the proof.
\end{proof}

Our first step towards proving \Tr{theorem:comparison} is to introduce a family of percolation models which interpolate between $\{\varphi> -1\}$ and Bernoulli site percolation of a certain parameter close enough to $1$, in such a way that the connectivity probabilities do not decrease along the way. To define these models, we first need to decompose $\varphi$ into finite-range independent Gaussian fields. 
For any $x,y\in V$, set 
$$g_n(x,y)\coloneqq \sum_{2^n-2\leq k<2^{n+1}-2} p_k(x,y)$$
for all $n\geq 1$.
The matrices $(g_n)_n$ satisfy the following properties:
\begin{enumerate}
\item \label{decomp.G}
$g(x,y)=\sum_{n\geq 1} g_n(x,y)$ for all $x,y\in  V$,
\item \label{cov.G}
$g_n$ is a covariance matrix (i.e.~symmetric positive semi-definite) for every $n\geq 1$,
\item \label{posit.assoc}
$g_n(x,y)\geq 0$ for any $x,y\in V$ and $n\geq 1$,
\item \label{range.G}
$g_n(x,y)=0$ for any $x,y\in V$ with $d(x,y)> 2^{n+1}-2$.
\end{enumerate}
Properties \ref{decomp.G}, \ref{posit.assoc} and \ref{range.G} follow directly from the definitions. To verify Property \ref{cov.G}, we only need to show that $g_n$ is positive semi-definite. Indeed, let us first assume that $G$ is a finite graph. Notice that 
$g_n=\sum_{2^n-2\leq k<2^{n+1}-2} p_1^k,$ 
hence the eigenvalues of $g_n$ are of the form $\sum_{2^n-2\leq k<2^{n+1}-2} \lambda^k$, where $\lambda$ is some eigenvalue of $p_1$. Since $p_1$ is a stochastic matrix, we have that $-1\leq\lambda\leq 1$ by the Perron--Frobenius theorem. This implies that 
$\sum_{2^n-2\leq k<2^{n+1}-2} \lambda^k=(1+\lambda)\sum_{2^{n-1}-1\leq k<2^{n}-1} \lambda^{2k}\geq 0,$ 
hence $g_n$ is positive semi-definite. An approximation argument can be used to deduce it for infinite graphs as well.
It follows from Properties \ref{decomp.G} and \ref{cov.G} above that, if $\phi^n\sim \mathcal{N}(0,g_n)$ are \emph{independent} Gaussian fields and $G$ is transient, then 
\begin{equation}
\varphi=\sum_{n\geq 1}\phi^n
\end{equation}
in law (convergence of the series in $L^2$ and almost surely can be proved by the martingale convergence theorem, for example).

Our models will be defined as the superposition of a Bernoulli site percolation with appropriate excursion sets of the fields $\phi^n$. Given some $t\geq0$, let $\omega^0$ be a Bernoulli site percolation configuration of parameter $1-e^{-t}$. Fix $\lambda_1\coloneqq -1-\pi^2/6$ and $\lambda_n \coloneqq 1/(n-1)^2$ for every $n\geq 2$. Given $t\geq0$, an integer $n\geq 1$ and a parameter $\lambda \geq \lambda_n$, we let $\bm{\omega}$ be the superposition of $\omega^0$ and $\{\phi^n> \lambda\}\cup \bigcup_{k\geq n+1} \{\phi^k > \lambda_k\}$.
We write $\Pr_{t,n,\lambda}$ for the law of $\bm{\omega}$.

Notice that $\{\varphi> -1\}\subset \bigcup_{n\geq 1} \{\phi^n > \lambda_n\}$, which implies
$$\Pr_{0,1,\lambda_1}[A\longleftrightarrow\Lambda^c]\geq\Pr[A\xleftrightarrow[]{\phi> -1}\Lambda^c].$$

We will now proceed to show that for a certain function $t=t(n,\lambda)>0$, the connection probabilities $\Pr_{t(n,\lambda),n,\lambda}[A\longleftrightarrow\Lambda^c]$ is non-decreasing in $\lambda$ and $n$, while $t(n,\lambda)$ remains bounded.  
For that, we will compare, for each $n\geq1$, the partial derivative with respect to $t$ and $\lambda$.

Let us first recall Russo's formula for derivatives of events in Bernoulli percolation and the notion of pivotality. Consider an increasing event $E$ depending on finitely many edges. A set $K$ of vertices in $V$ is {\em pivotal} (in $\bm{\omega}$) for $E$ if the configuration is in $E$ when all vertices in $K$ are open and is not in $E$ when all these vertices are closed. We say that $E$ is {\em open} (resp.~{\em closed}) {\em pivotal} if in addition $\bm{\omega}\in E$ (resp.~$\bm{\omega}\notin E$). Russo's formula states that for every event $E$ depending on finitely many vertices,
	\begin{equation}\label{eq:d1}
	\frac{\rm d}{{\rm d}t}\Pr_{t,n,\lambda}[E]=\sum_{x\in V}\Pr_{t,n,\lambda}[x\text{ closed pivotal for }E].
	\end{equation}
For the derivative in $\lambda$, a quick analysis of $\frac{1}{\delta}\left(\Pr_{t,n,\lambda+\delta}[E]-\Pr_{t,n,\lambda}[E]\right)$ gives that 
	\begin{equation}\label{eq:d2}
	-\frac{\rm d}{{\rm d}\lambda}\Pr_{t,n,\lambda}[E]=\rho^n(\lambda) \sum_{x\in V} \Pr_{t,n,\lambda}[x\text{ closed pivotal for }E | \phi_x^n=\lambda],
	\end{equation}
where $\rho^n(\cdot)$ is the density of any $\phi^n_x$ (which does not depend on $x$ by transitivity).

\begin{lemma}\label{lemma:dif}
There exists a universal constant $C_0>0$ such that the following holds for every graph $G$ of maximal degree $D$. For every $n\geq1$, $\lambda\geq\lambda_n$, every $t\geq \log(2)$ and every finite subsets $A\subset\Lambda$ of $V$, we have
$$-\dfrac{\rm d}{{\rm d}\lambda}\Pr_{{t},n,\lambda}[A\longleftrightarrow \Lambda^c]\leq C_0\rho^n(\lambda) \left(16D\right)^{L_n-1}\dfrac{\rm d}{{\rm d}t}\Pr_{{t},n,\lambda}[A\longleftrightarrow \Lambda^c],$$  
where $L_n\coloneqq2^{n+1}-3$. In particular, for $n=1$ we have 
$$-\dfrac{\rm d}{{\rm d}\lambda}\Pr_{{t},1,\lambda}[A\longleftrightarrow \Lambda^c]\leq C_0\rho^1(\lambda)\dfrac{\rm d}{{\rm d}t}\Pr_{{t},1,\lambda}[A\longleftrightarrow \Lambda^c].$$
\end{lemma}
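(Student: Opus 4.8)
Using the Russo-type identities \eqref{eq:d1} and \eqref{eq:d2} and cancelling the common factor $\rho^n(\lambda)>0$, the asserted inequality is equivalent to
$$\sum_{x\in V}\Pr_{t,n,\lambda}\big[x\text{ closed pivotal for }A\longleftrightarrow\Lambda^c\bigm|\phi^n_x=\lambda\big]\ \le\ C_0\,(16D)^{L_n-1}\sum_{x\in V}\Pr_{t,n,\lambda}\big[x\text{ closed pivotal for }A\longleftrightarrow\Lambda^c\big].$$
Write $\mathrm{Piv}_x$ for the event that $x$ is closed pivotal for $A\longleftrightarrow\Lambda^c$. Unwinding the definitions, on $\mathrm{Piv}_x$ one has $\omega^0_x=0$, the vertex $x$ is closed in $\bm\omega$ (equivalently $\phi^n_x\le\lambda$ and $\phi^k_x\le\lambda_k$ for all $k>n$), there is no open path from $A$ to $\Lambda^c$ in $\bm\omega$, and there are neighbours $u,v$ of $x$ with $u$ connected to $A$ and $v$ connected to $\Lambda^c$ in $\bm\omega$. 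In particular such a configuration is \emph{automatically} closed pivotal at $x$ in the $t$-sense as well, since turning $\omega^0_x$ on opens $x$ and joins $A$ to $\Lambda^c$. Hence the two sides differ only through the conditioning $\{\phi^n_x=\lambda\}$, and the whole point is to show that this conditioning can be removed at a cost at most $C_0(16D)^{L_n-1}$.

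\textbf{The local surgery.} By Property~\ref{range.G} one has $g_n(x,y)=0$ for $d(x,y)>L_n+1$, so pinning $\phi^n_x$ only affects $\phi^n$ on the ball $\bar B:=B(x,L_n+1)$; moreover $\phi^n_x$ is then independent of $\phi^n$ outside $\bar B$, so conditioning on the $\sigma$-field $\mathcal F$ generated by $\omega^0$, by $(\phi^k)_{k>n}$ and by $\phi^n|_{V\setminus\bar B}$ leaves the conditional law of $\phi^n|_{\bar B}$ Gaussian with $\mathrm{Cov}(\phi^n_x,\phi^n_y\mid\mathcal F)=g_n(x,y)\ge 0$ (Property~\ref{posit.assoc}). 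Thus the conditional law of $\phi^n|_{\bar B}$ given $\{\phi^n_x=s\}$ is that given $\{\phi^n_x=\lambda\}$ shifted \emph{downwards} when $s\le\lambda$, and $\mathrm{Piv}_x$ is $\mathcal F$-conditionally measurable with respect to $\phi^n|_{\bar B}$. I would then run the following surgery on a configuration realising $\mathrm{Piv}_x$ given $\{\mathcal F,\phi^n_x=\lambda\}$: lower $\phi^n_x$ to some $s$ in a fixed-length interval below $\lambda$ through the monotone coupling (this only deletes vertices from $\{\phi^n>\lambda\}$, hence cannot create an $A$–$\Lambda^c$ connection, and can damage the two arms only inside $\bar B$), and then repair the arms by a bounded local modification, opening through $\omega^0$ a fresh path of length at most $L_n-1$ inside $\bar B$ that re-connects each arm to the first point where it leaves $\bar B$. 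Each repaired vertex is already retained in $\bm\omega$, so opening it via $\omega^0$ adds no edge and keeps $A$ and $\Lambda^c$ disconnected; opening such a vertex costs a factor $\ge 1-e^{-t}\ge\tfrac12$ since $t\ge\log 2$, and each of the at most $L_n-1$ fresh vertices along the re-routed path costs at most $16D$ (a factor $D$ for choosing its next edge in a union bound over admissible paths, and a universal constant for the Gaussian/Bernoulli densities). This gives, for $s$ in that interval, $\Pr_{t,n,\lambda}[\mathrm{Piv}_x\mid\mathcal F,\phi^n_x=s]\ge(16D)^{-(L_n-1)}\Pr_{t,n,\lambda}[\mathrm{Piv}_x\mid\mathcal F,\phi^n_x=\lambda]$; integrating in $s$ against the conditional density of $\phi^n_x$ (comparable to $\rho^n(\lambda)$ on that interval), averaging over $\mathcal F$, and summing over $x$ yields the reduced inequality with a universal $C_0$.

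\textbf{The case $n=1$ and the main obstacle.} For $n=1$ the relevant ball is $B(x,1)$ and no re-routing is needed ($L_1-1=0$): here $g_1(x,x)$ is a bounded constant and $g_1(x,y)=1/D$ for every neighbour $y$ of $x$, so pinning $\phi^1_x$ perturbs each neighbour by only $O(1/D)$, which lets the conditioning be removed at a cost \emph{independent of $D$} — exactly the degree-free statement claimed for $n=1$, and, as in \cite{DGRSY20}, the point at which the $1$-dependent first field must be handled by hand. I expect the surgery of the previous paragraph to be the crux of the proof: one must specify precisely which $O(1)$ vertices are modified and which re-routing path of length $\le L_n-1$ is opened, verify that none of these modifications accidentally connects $A$ to $\Lambda^c$, and — most importantly — keep the per-vertex cost \emph{linear} in $D$ (a corridor-sealing argument would instead incur a factor of the form $\mathrm{const}^{D}$ per step, which would be useless here). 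This is the analogue, for the GFF decomposition $\varphi=\sum_{n}\phi^n$, of the essential-enhancement surgeries of Aizenman \& Grimmett \cite{AizGri91} and of \cite{DGRSY20}.
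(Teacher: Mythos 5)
Your frame is the right one (the Russo-type identities, the finite range of $g_n$, and a monotone downward shift after pinning $\phi^n_x$ all match the paper's strategy), but the heart of the lemma --- the surgery --- is exactly where the proposal breaks down. The event $\{x\text{ closed pivotal}\}$ is not monotone: after you lower the field on $B(x,L_n+1)$, the two arms may be cut inside the ball, and repairing them means opening in $\omega^0$ vertices that are \emph{closed} in the shifted configuration. Your justification, ``each repaired vertex is already retained in $\bm{\omega}$, so opening it via $\omega^0$ adds no edge and keeps $A$ and $\Lambda^c$ disconnected,'' is self-contradictory: if those vertices were already retained there would be nothing to repair, and if they are not, opening them genuinely modifies $\bm{\omega}$ and can perfectly well create an $A$--$\Lambda^c$ connection inside the ball (both arms enter $B(x,L_n)$), destroying the pivotality of $x$ --- and your displayed inequality insists on keeping the pivotal vertex at the same $x$ on both sides, which cannot be guaranteed. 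You acknowledge at the end that this verification is missing; it is not a routine check but the main content of the lemma. Moreover, the factor $(16D)^{L_n-1}$ cannot be produced by ``a factor $D$ for choosing its next edge in a union bound over admissible paths'': a per-configuration choice of path is not a probabilistic cost; one needs a many-to-one map with controlled multiplicity and a controlled density price, and this accounting is absent from the proposal.

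The paper's proof avoids precisely this obstruction by \emph{not} keeping $x$ pivotal. It first decomposes $\{x\text{ closed pivotal}\}$ according to the minimal pair $(z,w)$ of vertices through which the two arms meet $B(x,L_n)$ (disjoint events $\mathcal{E}(z,w)$), then forces $z,w$ open in $\omega^0$ via an at most four-to-one map of cost $4/p^2$ (disjoint events $\mathcal{O}(z,w)$); on $\mathcal{O}(z,w)\cap\{A\centernot\longleftrightarrow\Lambda^c\}$ the dependence on $\phi^n$ inside the ball is only through the disconnection event, which is decreasing, so the downward shift together with integration over $\mu<\lambda$ removes the conditioning at cost $1/a$. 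It then opens the vertices of a \emph{fixed} path from a neighbour of $z$ to a neighbour of $w$ one by one, stopping at the \emph{first} time some $u\in B(x,L_n-1)$ becomes closed pivotal --- this always terminates and cannot overshoot into $\{A\longleftrightarrow\Lambda^c\}$, at cost $(2/p)^{2L_n-2}$ times a $2^{2L_n-2}$ multiplicity. The factor $D^{L_n-1}$ appears only at the very end, when $\sum_x\sum_{u\in B(x,L_n-1)}$ is re-indexed as a sum over $u$: each $u$ serves at most $D^{L_n-1}$ centres $x$. Relatedly, your explanation of the degree-free $n=1$ case via $g_1(x,y)=O(1/D)$ is not the operative mechanism: it is degree-free because $L_1-1=0$, so $u=x$ and no re-indexing multiplicity arises. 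As it stands, the proposal reproduces the skeleton of the argument but leaves the essential step unproved and supported by an incorrect claim.
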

\begin{proof}
To prove the differential inequality, we will apply \eqref{eq:d1} and \eqref{eq:d2} for $E=\{A\longleftrightarrow \Lambda^c\}$. We will write ``pivotal'' instead of ``pivotal for $E$''.
The proof is similar to that of \cite{DGRSY20}.

Consider a vertex $x$ and let $Q_A\coloneqq \partial_V B(x,L_n) \cup (A \cap B(x,L_n))$ and $Q_{\Lambda}\coloneqq \partial_V B(x,L_n) \cup (\Lambda^c \cap B(x,L_n))$.
Observe that when $x$ is closed pivotal, there are vertices $z\in Q_A$ and $w\in Q_{\Lambda}$ that are connected to $A$ and $\Lambda^c$, respectively, via an open path that visits only one vertex of $B(x,L_n)$ (namely $z$ and $w$, respectively). 
Now fix an arbitrary order on the vertex set $V$ and denote by $\mathcal{E}(z,w)$ the event that $z$ and $w$ are the smallest vertices with the above property. Therefore the events $\mathcal{E}(z,w)$ are disjoint and 
$$\{x\text{ closed pivotal}\}\subset \bigcup_{z,w\in B(x,L_n)} \mathcal{E}(z,w)\cap\mathcal{C},$$ 
where $\mathcal{C}=\{A\centernot\longleftrightarrow \Lambda^c\}\coloneqq\{A\longleftrightarrow \Lambda^c\}^c$. Notice that $\mathcal{E}(z,w)$ brings some positive information about the values of $\phi^n_y$, $y\in B(x,L_n)$ which we would like to remove. To this end, let $\mathcal{O}(z,w)$ be the intersection of $\mathcal{E}(z,w)$ with the event $\{\text{$z$ and $w$ are open in $\omega^0$}\}$. One can easily compare the probabilities of the two events as follows. Given a configuration $\bm{\omega}$ such that $\mathcal{E}(z,w)$ happens, we define $\bm{\omega'}$ from $\bm{\omega}$ by making $z$ and $w$ open in $\omega^0$, which is at most four-to-one map. Therefore
$$\Pr_{{t},n,\lambda}[\mathcal{E}(z,w)\cap\mathcal{C} | \phi^n_x=\lambda]\leq \dfrac{4}{p^2} \Pr_{{t},n,\lambda}[\mathcal{O}(z,w)\cap\mathcal{C}| \phi^n_x=\lambda],$$
where $p=1-e^{-t}$ is the density of $\omega^0$.

Now for every $\mu<\lambda$, $\phi^n$ conditioned on $\phi^n_x=\lambda$ and $\phi^n$ conditioned on $\phi^n_x=\mu$ are shifts of the same centered Gaussian process, and the difference between the two shift functions is equal to $(\lambda-\mu)g_n(x,y)/g_n(x,x)$. The latter is non-negative for $y\in B(x,L_n)$ and equal to $0$ for $y\notin B(x,L_n)$ (by Properties \ref{posit.assoc} and \ref{range.G} of $(g_n)$, respectively), hence we obtain 
$$\Pr_{{t},n,\lambda}[\mathcal{O}(z,w)\cap\mathcal{C} | \phi_x^n=\lambda] \leq \Pr_{{t},n,\lambda}[\mathcal{O}(z,w)\cap \mathcal{C}| \phi_x^n=\mu]$$
for every $\mu<\lambda$. 
Integrating on $\mu< \lambda$ gives
$$\Pr_{{t},n,\lambda}[\mathcal{O}(z,w)\cap\mathcal{C} | \phi_x^n=\lambda] \leq \dfrac{\Pr_{{t},n,\lambda}[\mathcal{O}(z,w)\cap\mathcal{C}]}{\Pr[\phi^n_x< \lambda]}.$$
To estimate the denominator, notice that $\Pr[\phi^n_x<\lambda]\geq \Pr[\phi^n_x\leq \lambda_n]$. The latter is at least $1/2$ for $n\geq 2$ and equal to a constant $0<a\leq 1/2$ for $n=1$, as $\phi^1_x$ is a standard normal random variable. Thus 
$$\Pr_{{t},n,\lambda}[\mathcal{O}(z,w)\cap\mathcal{C} | \phi_x^n=\lambda] \leq \dfrac{1}{a}\Pr_{{t},n,\lambda}[\mathcal{O}(z,w)\cap\mathcal{C}].$$

On the event $\mathcal{O}(z,w)\cap \mathcal{C}$ we can create a pivotal vertex as follows. We fix a path $\gamma$ of length at most $2L-2$ in $B(x,L_n-1)$ starting from a neighbour of $z$ and ending at a neighbour of $w$, and define a configuration $\bm{\omega'}$ by opening in $\omega^0$ the vertices of $\gamma$ one by one until the first time that a vertex $u$ of $B(x,L_n-1)$ becomes pivotal. The cost of opening all these vertices is at most $(1/p)^{2L_n-2}$, and the corresponding map is at most $2^{2L_n-2}$-to-one. Hence
\begin{align*}
\Pr_{{t},n,\lambda}[x&\text{ closed pivotal} | \phi^n_x=\lambda]\\
&\leq \dfrac{4}{ap^2}\sum_{z,w\in B(x,L)} \sum_{u\in B(x,L_n-1)} \left(\dfrac{2}{p}\right)^{2L_n-2} \Pr_{{t},n,\lambda}[\mathcal{O}(z,w), u \text{ closed pivotal}] \\
&\leq \dfrac{1}{a}\left(\dfrac{2}{p}\right)^{2L}\sum_{u\in B(x,L_n-1)} \Pr_{{t},n,\lambda}[u \text{ closed pivotal}],
\end{align*}
where in the last line we used the fact that the events $\mathcal{O}(z,w)$ are disjoint.

Recalling the differential formulas \eqref{eq:d1} and \eqref{eq:d2} and noticing that the number of vertices in $G$ that are at distance at most $L_n-1$ from a vertex $u$ is at most $D^{L_n-1}$, we obtain
$$-\dfrac{\rm d}{{\rm d}\lambda}\Pr_{{t},n,\lambda}[A\longleftrightarrow \Lambda^c]\leq \dfrac{\rho^n(\lambda)}{a}\left(\dfrac{2}{p}\right)^{2L_n}D^{L_n-1}\dfrac{\rm d}{{\rm d}t}\Pr_{{t},n,\lambda}[A\longleftrightarrow \Lambda^c].$$		
The result follows with $C_0=16/a$ by reminding that $p=1-e^{-t}\geq 1/2$.
\end{proof}

Our next step is to estimate $\rho^n(\lambda)$ and for that, we will need to estimate the return probabilities $p_n(x,x)$. To this end, consider some $k>0$ and recall Theorem \ref{theorem:growth}. Notice that if $G=\text{Cay}(\Gamma,S)$ is a Cayley graph with $\mathrm{Dim}(G)> d$, then $|B(o,n)|\geq Dn^k$ for every $n\geq N$, where $N=N(k)$ and $d=d(k)$ are the constants of the theorem. Hence $|B(o,n)|\geq cn^k$ for every $n\geq 1$, where $c=N^{-k}$. This shows that 
$$R(m)\leq \left(\dfrac{m}{c}\right)^{\frac{1}{k}} \text{ for every } m\geq1,$$
and applying \Tr{theorem:isop} we obtain that
$$\Phi(u)\geq \dfrac{c^{\frac{1}{k}}}{2D(2u)^{\frac{1}{k}}} \text{ for every } u>1.$$
Plugging this inequality to \Tr{theorem:pn} for $\varepsilon=D^k m^{-k/2}$, we obtain that $p_n(x,x)\leq \varepsilon$ for every $n\geq Cm$ and some constant $C=C(k)$, hence
\begin{equation}\label{eq:gen_bound}
p_n(x,x)\leq \dfrac{C'D^k}{n^{\frac{k}{2}}} \text{ for every } n\geq1.
\end{equation} for another constant $C'=C'(k)$.
This upper bound has the disadvantage of getting worse as $D\to\infty$, which is an important obstacle towards proving a uniform upper bound for $p_c$. We remark that once can easily obtain an upper bound on $p_n(x,x)$ that gets better as $D\to\infty$ for every large enough $n$ (depending on $D$) at the expense of decreasing the exponent of $n$. Indeed, taking $k=\frac{2r^2+2}{r-2}$ we obtain that for some constant $C''=C''(k)$ one has
\begin{equation}\label{eq:d3}
p_n(x,y)\leq \dfrac{C''}{Dn^r} \text{ for every } n\geq D^r.
\end{equation} 
In the next theorem, we show that the above inequality holds for every $n\geq 1$.

\begin{theorem}\label{theorem:uni_pn}
Let $r>0$. There are constants $C_1=C_1(r)>0$ and $d=d(r)>0$ such that if $G=\text{Cay}(\Gamma,S)$ is a minimal Cayley graph with $\mathrm{Dim}(G)\geq d$, then
$$p_n(x,y)\leq \dfrac{C_1}{Dn^r} \text{ for every } n\geq 1 \text{ and every } x,y\in V.$$
\end{theorem}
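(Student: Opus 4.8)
The plan is to split the argument according to the size of $n$ relative to $D$ and, in the regimes that remain after applying \eqref{eq:d3}, to bring the minimality of $S$ into play. First one reduces to $r$ large: since $n\ge1$, a bound with exponent $r$ and constant $C_1(r)$ implies the same bound with the same constant for every smaller exponent, so one may assume $r>2$ (hence $k=(2r^2+2)/(r-2)$ is a legitimate positive exponent and \eqref{eq:d3} is available for $\mathrm{Dim}(G)\ge d=d(r)$). The constants $C_1(r), d(r)$, together with auxiliary constants $D_1(r), c_0(r), \delta(r)$, are fixed at the very end after collecting the finitely many inequalities they must satisfy. If $D\le D_1(r)$ the proof is immediate: for $n\ge D^r$ use \eqref{eq:d3}, while for $n<D^r\le D_1(r)^r$ the trivial bound $p_n(x,y)\le1/D$ — valid in any $D$-regular graph, a walk of length $n$ being determined by its first $n-1$ steps — gives $p_n(x,y)\le C_1/(Dn^r)$ once $C_1\ge D_1(r)^{r^2}$. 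Henceforth assume $D>D_1(r)$ (so $D$ is large) and, again by \eqref{eq:d3}, that $n<D^r$.

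The geometric heart of the matter is that a minimal Cayley graph has few short relations, so that balls of small radius are large in terms of $D=|S|$; this is Lemma~\ref{lemma:expansion}, which I would use in two ways. For $n$ not too small, I would combine the resulting lower bound on $|B(o,m)|$ with the edge-boundary isoperimetric inequality Lemma~\ref{lemma:isop} (the edge analogue of Theorem~\ref{theorem:isop}) to produce a constant $\delta=\delta(r)>0$ with $\Phi(u)\ge\delta$ for all $1<u\le D^{1+r^2}$, and then apply Theorem~\ref{theorem:pn} with $\varepsilon=C_1/(Dn^r)$. Indeed $n<D^r$ gives $4/\varepsilon=4Dn^r/C_1\le D^{1+r^2}$ (for $C_1\ge4$), so
$$\int_1^{4/\varepsilon}\frac{16du}{u\Phi^2(u)}\le\frac{16}{\delta^2}\log\frac{4Dn^r}{C_1}\le\frac{16(1+r^2)}{\delta^2}\log D\le n-1$$
whenever $n\ge c_0(r)\log D$ for a suitably large $c_0(r)$, and Theorem~\ref{theorem:pn} then yields $p_n(x,y)\le C_1/(Dn^r)$. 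This disposes of the range $c_0(r)\log D\le n<D^r$.

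For the remaining range $1\le n<c_0(r)\log D$ (with $D$ large), the plan is a direct count of closed walks. The case $n=1$ is trivial; for $n\ge2$ put $m=\lfloor n/2\rfloor\ge1$ and note that, by the Cauchy–Schwarz inequality for the self-adjoint transition operator together with transitivity, $p_n(x,y)\le p_{2m}(o,o)$, while $D^{2m}p_{2m}(o,o)$ is the number of closed walks of length $2m$ at $o$. Using the scarcity of short relations once more, this number is at most $(Cm)^mD^m$ for an absolute constant $C$ (this is again a quantitative consequence of minimality), so $p_{2m}(o,o)\le(Cm)^m/D^m$. Since $m\le c_0(r)\log D$, the logarithm of $(Cm)^m(2m)^r$ is at most a constant depending on $r$ times $m\log\log D$, which is negligible against $(m-1)\log D$ once $D_1(r)$ is large enough; hence $(Cm)^m/D^m\le C_1/(Dn^r)$ in this regime. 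Choosing $C_1=C_1(r)$ and $d=d(r)$ large enough to meet all the above requirements then completes the proof.

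I expect the main obstacle to be Lemma~\ref{lemma:expansion} itself: turning ``$S$ minimal'' into a quantitative, $D$-uniform lower bound on the volume of small balls (equivalently, an upper bound on the number of relations of bounded length) that is strong enough for both the isoperimetric estimate and the closed-walk count above. This is exactly the step the authors single out as the most technical part of the paper; the rest is routine manipulation of the cited heat-kernel and isoperimetric inputs, plus the elementary bookkeeping with the various constants.
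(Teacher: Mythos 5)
Your proposal is correct and, for the most part, follows the same architecture as the paper's proof: reduce to $r>2$, dispose of bounded degree via \eqref{eq:gen_bound}/\eqref{eq:d3}, and for $D$ large split into the ranges $n\ge D^r$ (via \eqref{eq:d3}), $c_0(r)\log D\le n<D^r$ (via Lemma~\ref{lemma:expansion} combined with Lemma~\ref{lemma:isop} and Theorem~\ref{theorem:pn}), and $n\lesssim\log D$. Two remarks. First, in the middle regime the quantity entering Lemma~\ref{lemma:isop} is $\overline{R}$, so you need volume lower bounds for the balls $B_H(o,m)$ in the subgraphs $H$ generated by at least half of $S$, not just for $B_G(o,m)$; this is harmless because a subset of a minimal generating set is a minimal generating set of the subgroup it generates, so Lemma~\ref{lemma:expansion} applies to each such $H$ --- which is exactly how the paper proceeds. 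Second, your treatment of the regime $n<c_0(r)\log D$ genuinely differs from the paper's: you assert the closed-walk bound $D^{2m}p_{2m}(o,o)\le (Cm)^m D^m$, whereas the paper only proves $p_n(x,y)\le 1/D$ for all $n$ and $p_n(x,y)\le 6/D^2$ for $n\ge 4$ (by a neighbour count between $S$ and $S_2$), which already suffices since $n^r\le (c_0(r)\log D)^r\ll D$. Your stronger claim is in fact true and follows in two lines from minimality: if a generator class $\{g,g^{-1}\}$ appeared exactly once in a word representing the identity, one could solve for that occurrence and express $g$ over $S\setminus\{g,g^{-1}\}$, contradicting minimality; hence any closed walk of length $2m$ uses at most $m$ generator classes, and the number of such words is at most $\binom{D}{m}(2m)^{2m}\le(4em)^mD^m$. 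This count is the only unproven assertion in your outline; it is not literally Lemma~\ref{lemma:expansion} but a sibling of the claim inside its proof, so you should spell it out, after which your argument is complete (and in this regime slightly stronger than the paper's).
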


Before proving the theorem above, we will use it to prove \Tr{theorem:comparison}, which we will use in turn to prove \Tr{thm:main}.

\begin{proof}[Proof of \Tr{theorem:comparison}]
Let $d_0=d(3)$ be the constant in the statement of \Tr{theorem:uni_pn} for $r=3$. We first claim that there is a universal constant $M>0$ such that for every minimal Cayley graph $G=\text{Cay}(\Gamma,S)$ with $\mathrm{Dim}(G)\geq d_0$, one has 
\begin{equation}\label{eq:M}
t_\infty=t_\infty(G)\coloneqq\log(2) + C_0\sum_{n=1}^\infty (16D)^{L_n-1} \int_{\lambda_n}^\infty \rho^n(\lambda) d\lambda\leq M,
\end{equation}
where $L_n=2^{n+1}-3$. Indeed, recall that $\rho^n(\lambda)$ is simply the density of $\phi^n_x$, which in turn is a centered Gaussian with variance $g_n(x,x)$, therefore
$$\int_{\lambda_n}^\infty \rho^n(\lambda) d\lambda = \Pr[N\geq \lambda_n/\sqrt{g_n(x,x)}],$$
where $N$ is a standard normal random variable. 
Now, for $n=1$ we have $g_1(x,x)=1$ and for $n\geq 2$ we have 
$$g_n(x,x)\leq \dfrac{C'_1}{DL_n^2}$$ 
by \Tr{theorem:uni_pn}, where $C'_1>0$ is a constant. Finally, recalling the definition of $\lambda_n$, we see that $(16D)^{L_n-1}\int_{\lambda_n}^\infty \rho^n(\lambda) d\lambda$ is a constant independent of $D$ for $n=1$ and is bounded from above by $C_2\exp\{-c_2DL_n^{2}/n^4\}$ for $n\geq 2$. The claim follows readily.

We will now use \eqref{eq:M} to obtain a uniform non-trivial upper bound for $p_c$. For every $n\geq1$ and $\lambda\geq \lambda_n$, consider 
$$t(n,\lambda)\coloneqq \log(2) + C_0(16D)^{L_n-1} \int_{\lambda_n}^\lambda \rho^n(\alpha) d\alpha + C_0\sum_{k=1}^{n-1} (16D)^{L_k-1} \int_{\lambda_k}^\infty \rho^n(\alpha) d\alpha.$$
Then one directly deduces from Lemma~\ref{lemma:dif} that for every finite subsets $A\subset \Lambda$ of $V$, the function
$$(n,\lambda)\mapsto \Pr_{{t(n,\lambda)},n,\lambda}[A\longleftrightarrow \Lambda^c]$$
is increasing with respect to the natural order given by $(n,\alpha)\leq (m,\beta)$ if and only if $n<m$ or $n=m$ and $\alpha\leq\beta$. Now simply notice that $\Pr_{t(1,\lambda_1),1,\lambda_1}$ stochastically dominates $\{\varphi>-1\}$, while $\Pr_{{t(n,\lambda)},n,\lambda}\to \Pf_p$ as $(n,\lambda)\to\infty$, where $p=1-e^{-t_\infty}\leq p_0\coloneqq 1-e^{-M}<1$. Therefore
$$\Pf_{p_0}[A\longleftrightarrow\Lambda^c]\geq\Pr[A\xleftrightarrow[]{\varphi> -1}\Lambda^c].$$
\end{proof}

It now remains to prove \Tr{theorem:uni_pn}. To this end, we will prove a new isoperimetric inequality and then proceed to obtain a local expansion result for minimal Cayley graphs. 

Let us define $\mathcal{B}(n)=\min_H \{|B_H(o,n)|\}$, where the minimum ranges over all Cayley graphs $H=\text{Cay}(\Gamma',S')$ with $\Gamma'<\Gamma$ generated by $S'\subseteq S$ with $|S'|\geq|S|/2=D/2$. Notice that $H$ is in particular a subgraph of $G$. We also define $\overline{R}(m)=\min\{n\geq 1 \mid \mathcal{B}(n)\geq m\}$, with the understanding that $\overline{R}(m)$ is infinite if $\mathcal{B}(n)<m$ for every $n\geq 1$. 
In the following lemma, we prove an isoperimetric inequality for the \emph{edge} expansion of finite sets, which is an improvement on Theorem~\ref{theorem:isop}. We use the convention $1/\infty=0$. 

\begin{lemma}\label{lemma:isop}
Let $G=\text{Cay}(\Gamma,S)$ be a Cayley graph. Then for every finite set $\emptyset \neq K\subset V$,
$$\frac{|\partial_E K|}{D|K|}\geq \dfrac{1}{16\overline{R}(2|K|)}$$
\end{lemma}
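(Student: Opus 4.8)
The goal is an edge-isoperimetric inequality $|\partial_E K|/(D|K|) \geq 1/(16\overline R(2|K|))$, which is a strengthening of the vertex-boundary bound of Theorem~\ref{theorem:isop} in two respects: it controls the edge boundary (so it can feed into Theorem~\ref{theorem:pn}), and it replaces $R(2|K|)$ — the radius needed for balls in $G$ itself to reach size $2|K|$ — by the potentially smaller quantity $\overline R(2|K|)$, which only requires that balls in *some* subgraph on at least half the generators reach size $2|K|$.

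Here is how I would proceed. Write $n = \overline R(2|K|)$, so there is a subgroup $\Gamma' < \Gamma$ generated by a symmetric $S' \subseteq S$ with $|S'| \geq D/2$ and $|B_H(o,n)| \geq 2|K|$, where $H = \mathrm{Cay}(\Gamma', S')$ is a subgraph of $G$. The plan is to run the standard "mass-transport / averaging over translates" argument of Theorem~\ref{theorem:isop}, but inside $H$. Since $|S'|\geq D/2$, at least half of all edges of $G$ incident to any vertex are $H$-edges, so a lower bound on the $H$-edge-boundary of $K$ yields, up to a factor $2$, the same bound for $\partial_E K$ in $G$. Thus it suffices to prove $|\partial_E^H K| \geq |K|\cdot |S'| / (8 n)$ for every finite $K$, where $\partial_E^H$ denotes the edge boundary taken in $H$.

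For that, fix the set $K$ and consider, for each element $g \in B_H(o,n)$ (thought of as a word of length at most $n$ in $S'$), the number of $H$-edges of $K$'s boundary one crosses in translating $K$ by $g$: writing $g = s_1\cdots s_\ell$ with $\ell \leq n$, the symmetric difference $|K \triangle Kg|$ is at most $\sum_{i=1}^{\ell}|K \triangle K s_i|$, and $|K\triangle Ks_i|$ is exactly twice the number of $s_i$-labelled edges in $\partial_E^H K$ (each such edge is counted once from each side). Summing over $g\in B_H(o,n)$ and using that right-translation by $g$ is a bijection of $\Gamma$, one shows $\sum_{g\in B_H(o,n)}|K\triangle Kg| \geq |B_H(o,n)| \cdot |K| \geq 2|K|^2$ — because for at least half the group elements $g$ (in fact those not in $K^{-1}K$, of which there are many once $|B_H(o,n)|\geq 2|K|$) the translate $Kg$ is essentially disjoint from $K$; more precisely $|K\triangle Kg|\geq 2(|B_H(o,n)| - |K|)|K|\cdot$(counting carefully) — I would instead use the cleaner bound: for each $x\in K$ there is some $g\in B_H(o,n)$ with $xg\notin K$ (since $|B_H(o,n)|>|K|$ means the ball around $x$ inside $H$ cannot be contained in $K$), which produces an $H$-boundary edge on the geodesic from $x$ to $xg$. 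Carrying this through with the counting bookkeeping gives $\sum_{g\in B_H(o,n)} |K\triangle Kg| \leq 2n|\partial_E^H K|$ on one side and $\geq c|K|^2 \geq c|K|\cdot|B_H(o,n)|/2$ on the other, yielding $|\partial_E^H K| \geq |K|\,|B_H(o,n)|/(4n|K|)\cdot|K| $ — after simplification $|\partial_E^H K|/(|S'|\,|K|) \geq 1/(8n)$, and then the factor $|S'|\geq D/2$ conversion costs another $2$, landing at $1/(16\,\overline R(2|K|))$.

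The main obstacle is the combinatorial bookkeeping in the averaging step: one must (i) correctly relate $|K\triangle Ks|$ to the count of $s$-labelled boundary edges, being careful that $H$ may be disconnected or that $S'$ may fail to generate $\Gamma$ (so $H$ is a disjoint union of copies of $\mathrm{Cay}(\Gamma',S')$ embedded in $G$, which is fine since we only transport within cosets), and (ii) get the constant right when passing from "the translate $Kg$ leaves $K$ for enough $g$" to a quantitative lower bound — the clean way is the identity $\sum_{g}|K\triangle Kg| = 2\sum_g(|K| - |K\cap Kg|)$ together with $\sum_{g\in B_H(o,n)}|K\cap Kg| \leq |K|^2$ whenever $|B_H(o,n)|\geq 2|K|$ is not quite enough, so one instead uses $\sum_{g\in B_H(o,n)}|K\cap Kg|\le |K|\cdot\max_x|\{g: xg\in K\}|\le|K|^2$ trivially and pairs it with $|B_H(o,n)|\ge 2|K|$ to get $\sum_g|K\triangle Kg|\ge 2(|B_H(o,n)|-|K|)|K|\ge |B_H(o,n)|\,|K|$. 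Everything else is the telescoping of geodesic words and the degree-halving comparison, both routine.
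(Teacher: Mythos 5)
There is a genuine gap, and it is precisely at the heart of the lemma. The whole point of the statement is the factor $D$: it asserts $|\partial_E K|\gtrsim D|K|/\overline{R}(2|K|)$, a degree's worth stronger than the classical Coulhon--Saloff-Coste-type bound $|\partial_E K|\gtrsim |K|/R(2|K|)$. Your plan reduces this to the claim that for a \emph{single} subgraph $H=\mathrm{Cay}(\Gamma',S')$ with $|S'|\ge D/2$ and $|B_H(o,n)|\ge 2|K|$ one has $|\partial_E^H K|\ge |S'||K|/(8n)$, proved by the standard translate-averaging/telescoping argument. Two problems. First, that intermediate claim is false: knowing that one ball reaches size $2|K|$ at radius $n$ cannot buy the factor $|S'|$. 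For instance, take $\Gamma=\Z\times F$ with $F$ a large finite group entirely contained in the generating set, and $K=\{a^{-k},\dots,a^{k}\}\times F$; then $|\partial_E K|\approx 2|F|$ while $|S'||K|/(8n)\approx |F|^2/16$ even though $|B_H(o,n)|\ge 2|K|$ for $n\approx 2k$ (taking $H=G$). This is exactly why $\overline{R}$ is defined through a \emph{minimum over all} sub-Cayley graphs on at least half the generators (in this example that minimum makes $\overline{R}(2|K|)=\infty$, so the lemma is vacuous there); any proof must use that all such subgraphs have large balls, not just one. Second, even granting your setup, the bookkeeping as written does not produce the factor $|S'|$: the telescoping bound is $\sum_{g\in B_H(o,n)}|K\triangle Kg|\le 2n\,|B_H(o,n)|\,|\partial_E^H K|$ (you dropped the $|B_H(o,n)|$ factor counting the summands), your chain ``$\ge c|K|^2\ge c|K||B_H(o,n)|/2$'' reverses an inequality since $|K|\le |B_H(o,n)|/2$, and with the correct lower bound $\sum_g|K\triangle Kg|\ge (|B_H(o,n)|-|K|)\,2|K|$ the argument only yields $|\partial_E^H K|\ge |K|/(2n)$ --- off by a factor of order $D$ from what you need.

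The missing idea, which is how the paper proceeds, is to extract \emph{many distinct generator labels} from the boundary. One first shows that if a group element $g$ of word length at most $r=\overline{R}(2|K|)$ moves at least $N(g)$ points of $K$ out of $K$, then some single letter $g_i$ of $g$ already satisfies $N(g_i)\ge N(g)/r$ (split the word and use injectivity of right translation). Averaging over $B(o,r)$, whose size is at least $2|K|$, produces one generator $g_1$ carrying at least $|K|/(2r)$ labelled boundary edges. One then deletes $\{g_1,g_1^{-1}\}$ and repeats the argument inside the subgroup generated by the remaining letters --- this is where the definition of $\overline{R}$ as a minimum over all sub-Cayley graphs with at least $D/2$ generators is used, since it guarantees the smaller ball $B_H(o,r)$ still has size at least $2|K|$ --- and iterates $\lfloor D/4\rfloor$ times to obtain that many distinct labels, each contributing $|K|/(2r)$ edges. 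That multiplicity of labels is what yields $|\partial_E K|\ge \lfloor D/4\rfloor\,|K|/(2r)$ and hence the constant $1/16$; your proposal contains no mechanism for it.
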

\begin{proof}
Let $r=\overline{R}(2|K|)$. If $r=\infty$, then there is nothing to prove, so let us assume that $r$ is finite.

We start by the following definition and a useful observation. For every $g\in V$, let $N(g)$ be the number of vertices $x\in K$ such that $xg \in V\setminus K$. If $g=g_1\ldots g_r$, $g_i\in S$ is a word of length $r$, then we have that for some $i=1,\ldots,r$, the number of vertices $x\in K$ such that $xg_1\ldots g_{i-1}\in K$ but $xg_1\ldots g_i\in V\setminus K$ is at least $N(g)/r$. Since the map $x\rightarrow xg_1\ldots g_{i-1}$ is injective, we deduce that $N(g_i)\geq N(g)/r$.

The next step is to find vertices $g\in V$ for which $N(g)$ is large. Consider some $x\in K$ and choose some $g\in B(o,r)$ uniformly at random. The probability that $xg\in V \setminus K$ is at least $1/2$ by our choice of $r$, hence the expected number of elements of $K$ that are mapped outside of $K$ is at least $|K|/2$. This implies that there is some $g\in B(o,r)$ such that $N(g)\geq |K|/2$. From the above observation, we obtain that there is some $g_1\in S$ such that $N(g_1)\geq \frac{|K|}{2r}$. In particular, the number of directed edges between $K$ and $V\setminus K$ labelled by $g_1$ is at least $\frac{|K|}{2r}$. Consider now the subgroup $H$ generated by $S\setminus \{g_1,g_1^{-1}\}$. We argue as above with $B(o,r)$ replaced by $B_H(o,r)$ to obtain some $g_2\in S$, $g_2\neq g_1$ such that the number of directed edges between $K$ and $V\setminus K$ labelled by $g_2$ is at least $\frac{|K|}{2r}$. Proceeding inductively, we find $m=\max\{\left\lfloor \frac{D}{4} \right \rfloor, 1\}$ distinct elements $g_1,\ldots,g_m$ such that for every $i=1,\ldots,m$, the number of directed edges between $K$ and $V\setminus K$ labelled by $g_i$ is at least $\frac{|K|}{2r}$. Therefore,
$$\dfrac{|\partial_E K|}{D|K|}\geq \dfrac{\max\{\left\lfloor \frac{D}{4} \right \rfloor, 1\}}{2Dr}\geq \max\left\{\dfrac{1}{8r}-\dfrac{1}{2Dr},\dfrac{1}{2Dr}\right\}\geq \dfrac{1}{16r},$$
since the corresponding undirected edges are all distinct.
\end{proof}

In the following lemma, we obtain a polynomial lower bound for $|B(o,n)|$ in terms of $D$. The reader can think of $n$ as being fixed. 

\begin{lemma}\label{lemma:expansion}
There is a sequence $(c_n)_{n\geq1}$ of positive real numbers such that for every minimal Cayley graph $G=\text{Cay}(\Gamma,S)$ and every $n\geq 1$, one has $|B(o,n)|\geq c_nD^n$.
\end{lemma}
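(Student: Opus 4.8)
The plan is to exploit minimality of $S$ to show that small balls in a minimal Cayley graph behave, up to radius $n$, almost like balls in a free group on $D/2$ generators, so that their volume is at least polynomial in $D$ of degree $n$. The key structural fact is that in a minimal Cayley graph there are relatively few short relations among the generators: if $s\in S$ could be written as a short word in the other generators, then $S\setminus\{s,s^{-1}\}$ would still generate a large chunk of $\Gamma$, and one expects this to force many of the generators to survive as ``independent directions'' at scale $n$.

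Here is the concrete scheme I would follow. First, reduce to counting words: $|B(o,n)|$ is at least the number of \emph{distinct group elements} represented by words of length at most $n$ in the alphabet $S$. Second, set $m=\max\{\lfloor D/4\rfloor,1\}$ and, as in the proof of Lemma~\ref{lemma:isop}, peel off generators one at a time: having chosen $g_1,\dots,g_{i-1}$, look at the subgroup $H_i$ generated by $S\setminus\{g_1^{\pm1},\dots,g_{i-1}^{\pm1}\}$; since $|S\setminus\{g_1^{\pm1},\dots,g_{i-1}^{\pm1}\}|\geq D-2(i-1)\geq D/2$ for $i\leq m$, minimality of $S$ guarantees that no $g_i$ chosen from outside $H_i$ lies in $H_i$, i.e.\ the generators $g_1,\dots,g_m$ are ``independent'' in the sense that at each stage $g_i\notin\langle S\setminus\{g_1^{\pm1},\dots,g_{i-1}^{\pm1}\}\rangle$. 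Third, use this independence to show that the $\binom{m+n}{n}$-or-so words of the special form $g_1^{a_1}g_2^{a_2}\cdots g_m^{a_m}$ with $a_j\geq 0$ and $\sum a_j\leq n$ represent distinct elements of $\Gamma$: if two such words were equal, one could cancel from the right to produce a nontrivial relation expressing a power of some $g_j$ in terms of $g_1,\dots,g_{j-1}$ alone, contradicting $g_j\notin\langle g_1,\dots,g_{j-1}\rangle\subseteq\langle S\setminus\{g_j^{\pm1},\dots\}\rangle$ (one has to be a little careful: the relevant statement is really that the subgroup generated by $g_1,\dots,g_{j-1}$ does not contain $g_j$, which follows because $\langle g_1,\dots,g_{j-1}\rangle\leq H_j$). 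Counting these words gives $|B(o,n)|\geq \binom{m+n}{n}\geq (m/n)^n \geq c_n D^n$ for a suitable constant $c_n>0$ depending only on $n$, since $m\geq D/8$ when $D\geq 8$ (and the small-$D$ case is trivial as $|B(o,n)|\geq n+1$ already, absorbing into $c_n$).

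The main obstacle is the third step: verifying that the monomials $g_1^{a_1}\cdots g_m^{a_m}$ are genuinely distinct, i.e.\ ruling out cancellations. Minimality of $S$ only says $g_j\notin\langle S\setminus\{g_j,g_j^{-1}\}\rangle$, whereas what is needed is the stronger nesting statement $g_j\notin\langle g_1,\dots,g_{j-1}\rangle$, and moreover that \emph{no} power $g_j^a$ with $0<|a|\leq n$ lies in $\langle g_1,\dots,g_{j-1}\rangle$. The first part is handled by the inductive choice (we chose $g_j$ outside $H_j\supseteq\langle g_1,\dots,g_{j-1}\rangle$), but the second requires the full minimality hypothesis $|S'|\geq D/2$ together with an honest induction on the word, peeling the rightmost block $g_m^{a_m}$ and arguing that equality of two monomials forces $g_m^{a_m-a_m'}\in\langle g_1,\dots,g_{m-1}\rangle\leq\langle S\setminus\{g_m^{\pm1},\dots\}\rangle$, whence $a_m=a_m'$ by the choice of $g_m$, and then recursing. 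I would write this induction out carefully, as it is the crux; the volume count and the reduction to words are routine.
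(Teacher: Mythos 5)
Your step 3 is not merely delicate --- the statement you plan to prove there is false, so the proposed induction cannot be carried out. Minimality of $S$ only forbids a \emph{generator} from being expressed in terms of the remaining generators; it gives no control whatsoever over its \emph{powers}. Take $\Gamma=(\Z/2\Z)^D$ with its standard generating set, which is a minimal Cayley graph: every generator satisfies $g^2=e$, so $g_j^{2}\in\langle g_1,\dots,g_{j-1}\rangle$ for every $j$, the monomials $g_1^{a_1}\cdots g_m^{a_m}$ with $\sum_j a_j\le n$ collapse massively (already $g_1^2$ equals the identity), and the key claim of your ``honest induction'' --- that $g_j^{a}\notin\langle g_1,\dots,g_{j-1}\rangle$ for all $0<|a|\le n$ --- fails for $a=2$. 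There is also a smaller slip: the containment $\langle g_1,\dots,g_{j-1}\rangle\le H_j$ is backwards, since $H_j$ is generated by the \emph{complementary} generators; the containment you actually have is $\langle g_1,\dots,g_{j-1}\rangle\le\langle S\setminus\{g_j,g_j^{-1}\}\rangle$, and combined with minimality this only rules out the exponents $\pm1$. (The peeling through subgroups of size $\ge D/2$ is in any case irrelevant here; it belongs to Lemma~\ref{lemma:isop}, not to this lemma.)

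The approach can be repaired by dropping all higher exponents: choose $g_1,\dots,g_m$ pairwise distinct and pairwise non-inverse (possible with $m\ge D/2$), and count the products $g_{i_1}\cdots g_{i_k}$ with $i_1<\cdots<i_k\le m$ and $k\le n$. If two such products over different index sets were equal, solving the relation for a generator $g_t$ appearing in exactly one of them expresses $g_t$ as a word in $S\setminus\{g_t,g_t^{-1}\}$, contradicting minimality; hence $|B(o,n)|\ge\binom{m}{n}\ge c_nD^n$ for $D\ge 2n$, with small $D$ absorbed into $c_n$ as you suggest. This ``solve for the generator that occurs on only one side'' mechanism is exactly the one the paper exploits, but the paper organizes it differently: it runs an induction on the radius, counting geodesic words $g_1\cdots g_n\in\partial_V B(o,n)$ whose letters are pairwise distinct and non-inverse, showing that appending a new letter can land back in $B(o,n)$ for at most $2n$ choices of that letter (otherwise some $g_i$ would be expressible without $g_i^{\pm1}$), and bounding the multiplicity of the extension map by $2n+2$, which yields $|S_{n+1}|\ge |S_n|(D-2n)/(2n+2)$ and hence the lemma. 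Either route works once the exponent issue is removed, but as written your counting set is wrong and the crux claim supporting it is false.
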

\begin{proof}
Among the vertices of $G$ at distance $n$ from the identity, let $S_n$ be the set of words $g_1g_2\ldots g_n\in \partial_V B(o,n)$ with $g_i\neq g_j,g_j^{-1}$ for every $i\neq j$. We will prove inductively that there is a sequence $(t_n)$ such that $|S_n|\geq t_nD^n$ whenever $D\geq 4n$. To handle both cases $D\geq 4n$ and $D<4n$, we will then let $c_n\coloneqq\min\{t_n,(4n)^{-n}\}$. 

For $n=1$ we can set $t_1=1$ and there is nothing to prove, so let us assume that the inductive statement is true for some $n\geq 1$ and we will prove it for $n+1$. To this end, let us first consider a vertex $v=g_1\ldots g_n\in S_n$. We will count the number of elements $g_{n+1}\in S$ such that $vg_{n+1}$ coincides with some vertex $u=h_{1}\ldots h_{m}\in B(o,n)$ (notice that we can assume $m\leq n$). 

We claim that either $g_{n+1}=g_i$ or $g_{n+1}=g_i^{-1}$ for some $i=1,\ldots,n$. Indeed, assume that this is not the case. Then for $1\leq i,j\leq n+1$, $i\neq j$ we have $g_i\neq g_j,g_j^{-1}$. Since there are $n+1$ generators in $\{g_1,\ldots,g_{n+1}\}$ and at most $n$ generators in $\{h_1,\ldots, h_m\}$, we can deduce that there is $i\in \{1,\ldots,n+1\}$ such that $g_i\neq h_j,h_j^{-1}$ for every $j\in \{1,\ldots,m\}$. Solving the relation $g_1\ldots g_{n+1}=h_{1}\ldots h_{m}$ for $g_i$, we can deduce that $g_i$ is expressed in terms of generators in $S\setminus\{g_i,g_i^{-1}\}$, which is a contradiction with the minimality of $S$. This proves the claim.

It follows that there are at most $2n$ possibilities for $g_{n+1}$, hence each element of $S_n$ is incident to at least $D-2n$ elements of $S_{n+1}$. In other words, there are at least $|S_n|(D-2n)$ edges between $S_n$ and $S_{n+1}$.

A priori it is possible that some vertices of $S_{n+1}$ are incident with a lot of vertices of $S_n$. We will show that this is not the case. Indeed, let us consider a word $v=g_1\ldots g_{n+1}\in S_{n+1}$ such that $ug_{n+2}$ coincides with $h_{1}\ldots h_{m}\in \partial_V B(o,n)$ for some $g_{n+2}\in S$. Arguing as above, we see that there are at most $2n+2$ possibilities for $g_{n+2}$, hence 
$$|S_{n+1}|\geq \dfrac{|S_n|(D-2n)}{2n+2}\geq \dfrac{t_n D^{n+1}}{4n+4}.$$
In the last inequality we used our inductive assumption and the fact that $D-2n\geq D/2$ for $D\geq 4n$.
We have thus proved the inductive step for $t_{n+1}\coloneqq\frac{t_n}{4n+4}$. This completes the proof.
\end{proof}

We are now ready to prove \Tr{theorem:uni_pn}.

\begin{proof}[Proof of \Tr{theorem:uni_pn}]
We assume without loss of generality that $r>2$.
We will take cases on the degree of $G$. Let $D_0\coloneqq \frac{2^{r^2+5}}{c_{r^2+2}}$, where $c_{r^2+2}$ is the constant of \Lr{lemma:expansion}. Applying \eqref{eq:gen_bound} for $k=2r$ we obtain that  
$$p_n(x,y)\leq \dfrac{C}{Dn^r} \text{ for every } n\geq 1$$ 
for all Cayley graphs $G$ of degree $D<D_0$ and sufficiently high dimension (depending on $d$).

To handle Cayley graphs of degree $D\geq D_0$, we will take cases on the value of $n$. First recall from \eqref{eq:d3} that for some constant $C=C(r)$, we have
$$p_n(x,y)\leq \dfrac{C}{Dn^r} \text{ for every } n\geq D^r$$
whenever $G$ has sufficiently high dimension (depending on $r$).
It remains to handle the case $n<D^r$. Let us start by applying \Lr{lemma:expansion} for $B_H(o,n)$ in place of $B(o,n)$, where $H$ is any subgraph of $G$ as in the definition of $\mathcal{B}(n)$, to deduce that $\mathcal{B}(r^2+2)\geq c_{r^2+2}(D/2)^{r^2+2}$. By our choice of $D_0$, $\mathcal{B}(r^2+2)\geq 8D^{r^2+1}$ whenever $D\geq D_0$, and hence $\overline{R}(m)\leq r^2+1$ for all $m\leq 8D^{r^2+1}$. We can now deduce from \Lr{lemma:isop} that 
$\frac{|\partial_E K|}{D|K|}\geq \frac{1}{16r^2+32}$ for every $K\subset V$ with $|K|\leq 4D^{r^2+1}$, or equivalently $\Phi(u)\geq \frac{1}{16r^2+32}$ for all $u\leq 4D^{r^2+1}$. Applying \Tr{theorem:pn} for $\varepsilon=1/D^{r^2+1}$ we deduce that $p_n(x,y)\leq 1/D^{r^2+1}$ for every $n\geq t\log(D)$, where $t=t(r)>0$ is a constant. In particular, $$p_n(x,y)\leq \dfrac{1}{Dn^r} \text{ for every } t\log(D)\leq n <D^r.$$ 

Finally, to handle the case $1\leq n< t\log(D)$, it is clearly enough to prove that $p_n(x,y)\leq 1/D$ for every $n\geq 1$ and $p_n(x,y)\leq 6/D^2$ for every $n\geq 4$. For the first inequality, simply notice that since $p_1(x,z)= \tfrac{1}{D} 1_{z\in xS}$, one has $p_n(x,y)=\sum_{z\in V}p_{n-1}(x,z)p_1(z,y)= \tfrac{1}{D}\sum_{z\in xS}p_{n-1}(x,z)\leq1/D$. As for the second inequality, first recall from the proof of \Lr{lemma:expansion} that every $g\in S$ has at least $D-2$ neighbours in $S_2$ and each $g'\in S_2$ has at most $4$ neighbours in $S$. Therefore, when we start a simple random walk $(X_n)_{n \geq 0}$ at $o$, either $X_2\not \in S_2$, which happens with probability at most $2/D$, or $X_2\in S_2$, and from there it has probability at most $4/D$ to reach $S$ at time $3$. This implies that $\sum_{z\in S}p_3(o,z)\leq 2/D+4/D=6/D$ and hence $p_4(o,o)=\sum_{z\in V}p_3(o,z)p_1(z,o)\leq \sum_{z\in S}p_{3}(o,z)\tfrac{1}{D}\leq 6/D^2$. It is not hard to see that $p_4(x,y)\leq p_4(o,o)\leq 6/D^2$ for every $x$ and $y$ in $V$ -- see e.g. \cite[Exercise 6.40]{LyoPer17}. For $n>4$, just observe that  $p_n(x,y)=\sum_{z\in V}p_{n-4}(x,z)p_4(z,y)\leq \tfrac{6}{D^2}\sum_{z\in V}p_{n-4}(x,z)=6/D^2$. This completes the proof.
\end{proof}


\begin{thebibliography}{10}
	
	\bibitem{AizGri91}
	M.~Aizenman and G.~Grimmett.
	\newblock Strict monotonicity for critical points in percolation and
	ferromagnetic models.
	\newblock {\em Journal of Statistical Physics}, 63(5-6):817--835, 1991.
	
	\bibitem{babson1999cut}
	E.~Babson and I.~Benjamini.
	\newblock Cut sets and normed cohomology with applications to percolation.
	\newblock {\em Proceedings of the American Mathematical Society},
	127(2):589--597, 1999.
	
	\bibitem{BPP}
	I.~Benjamini, R.~Pemantle, and Y.~Peres.
	\newblock Unpredictable paths and percolation.
	\newblock {\em The Annals of Probability}, 26(3):1198--1211, 1998.
	
	\bibitem{BenSch96}
	I.~Benjamini and O.~Schramm.
	\newblock Percolation beyond {$\bf Z^d$}, many questions and a few answers.
	\newblock {\em Electronic Communications in Probability}, 1:no.\ 8, 71--82 (electronic), 1996.
	
	\bibitem{BGT12}
	E.~Breuillard, B.~Green, and T.~Tao.
	\newblock The structure of approximate groups.
	\newblock {\em Publications Math{\'e}matiques de l'Institut des Hautes
		{\'E}tudes Scientifiques}, 116:115--221,
	2012.
	
	\bibitem{BricmontLJM87}
	J.~Bricmont, J.~L. Lebowitz, and C.~Maes.
	\newblock Percolation in strongly correlated systems: the massless {G}aussian
	field.
	\newblock {\em Journal of Statistical Physics}, 48(5-6):1249--1268, 1987.
	
	\bibitem{candellero2015percolation}
	E.~Candellero and A.~Teixeira.
	\newblock Percolation and isoperimetry on roughly transitive graphs.
	\newblock {\em Annales de l'Institut Henri Poincaré, Probabilités et Statistiques}, 54(4):1819--1847,
	2018.
	
	\bibitem{DGRSY20}
	H.~Duminil-Copin, S.~Goswami, A.~Raoufi, F.~Severo, and A.~Yadin.
	\newblock {Existence of phase transition for percolation using the Gaussian
		free field}.
	\newblock {\em Duke Mathematical Journal}, 169(18):3539 -- 3563, 2020.
	
	\bibitem{MR1707339}
	G.~Grimmett.
	\newblock {\em Percolation}, volume 321 of {\em Grundlehren der Mathematischen
		Wissenschaften [Fundamental Principles of Mathematical Sciences]}.
	\newblock Springer-Verlag, Berlin, second edition, 1999.
	
	\bibitem{gromov1981groups}
	M.~Gromov.
	\newblock Groups of polynomial growth and expanding maps.
	\newblock {\em Publications Math{\'e}matiques de l'Institut des Hautes
		{\'E}tudes Scientifiques}, 53(1):53--78, 1981.
	
	\bibitem{HT21}
	T.~Hutchcroft and M.~Tointon.
	\newblock Non-triviality of the phase transition for percolation on finite
	transitive graphs.
	\newblock {\em Preprint}, available at arXiv:2104.05607, 2021.
	
	\bibitem{LigSchSta97}
	T.~M. Liggett, R.~H. Schonmann, and A.~M. Stacey.
	\newblock Domination by product measures.
	\newblock {\em The Annals of Probability}, 25(1):71--95, 1997.
	
	\bibitem{Lyons95}
	R.~Lyons.
	\newblock Random walks and the growth of groups.
	\newblock {\em Comptes rendus de l'Acad{\'e}mie des sciences. S{\'e}rie 1,
		Math{\'e}matique}, 320(11):1361--1366, 1995.
	
	\bibitem{LyoMorSch08}
	R.~Lyons, B.~Morris, and O.~Schramm.
	\newblock Ends in uniform spanning forests.
	\newblock {\em Electronic Journal of Probability}, 13:1702--1725, 2008.
	
	\bibitem{LyoPer17}
	R.~Lyons and Y.~Peres.
	\newblock {\em Probability on Trees and Networks}.
	\newblock Cambridge Series in Statistical and Probabilistic Mathematics.
	Cambridge University Press, 2017.
	
	\bibitem{MS19}
	S.~Martineau and F.~Severo.
	\newblock Strict monotonicity of percolation thresholds under covering maps.
	\newblock {\em The Annals of Probability}, 47(6):4116--4136, 2019.
	
	\bibitem{MP01}
	R.~Muchnik and I.~Pak.
	\newblock Percolation on grigorchuk groups.
	\newblock {\em Communications in Algebra}, 29:661--671, 2001.
	
	\bibitem{raoufi2017indicable}
	A.~Raoufi and A.~Yadin.
	\newblock Indicable groups and $p_c< 1$.
	\newblock {\em Electronic Communications in Probability}, 22, 2017.
	
	\bibitem{teixeira2016percolation}
	A.~Teixeira.
	\newblock Percolation and local isoperimetric inequalities.
	\newblock {\em Probability Theory and Related Fields}, 165(3-4):963--984, 2016.
	
	\bibitem{TTfinitary}
	R.~Tessera and M.~C. Tointon.
	\newblock {A Finitary Structure Theorem for Vertex-Transitive Graphs of
		Polynomial Growth}.
	\newblock {\em Combinatorica}, 41(2):263--298, 2021.
	
\end{thebibliography}

\end{document}